\newtheorem{theorem}{Theorem}[section]
\newtheorem{lemma}[theorem]{Lemma}
\newtheorem{proposition}[theorem]{Proposition}
\newtheorem{example}[theorem]{Example}
\newtheorem{remark}[theorem]{Remark}
\DeclareMathOperator{\diam}{diam}
\newcommand{\spb}[1]{\smallskip}
\newcommand{\mpb}[1]{\medskip}
\newcommand{\bpb}[1]{\bigskip}
\renewcommand{\a}{\alpha}
\renewcommand{\b}{\beta}
\newcommand{\e}{\varepsilon}
\renewcommand{\d}{\delta}
\newcommand{\g}{\gamma}
\newcommand{\G}{\Gamma}
\begin{document}
\DeclareGraphicsExtensions{.jpg,.pdf,.mps,.png}
\title{GROMOV HYPERBOLICITY OF MINOR GRAPHS}

\author[Walter Carballosa]{Walter Carballosa$^{(1)}$}
\address{National council of science and technology (CONACYT) $\&$ Autonomous University of Zacatecas,
Paseo la Bufa, int. Calzada Solidaridad, 98060 Zacatecas, ZAC, Mexico}
\email{waltercarb@gmail.com}
\thanks{$^{(1)}$  Supported in part by a grant from Ministerio de Econom{\'\i}a y Competitividad (MTM 2013-46374-P), Spain.}

\author[Jos\'e M. Rodr{\'\i}guez]{Jos\'e M. Rodr{\'\i}guez$^{(1)}$}
\address{Department of Mathematics, Universidad Carlos III de Madrid,
Avenida de la Universidad 30, 28911 Legan\'es, Madrid, Spain}
\email{jomaro@math.uc3m.es}

\author[Omar Rosario]{Omar Rosario$^{(1)}$}
\address{Department of Mathematics, Universidad Carlos III de Madrid,
Avenida de la Universidad 30, 28911 Legan\'es, Madrid, Spain}
\email{orosario@math.uc3m.es}

\author[Jos\'e M. Sigarreta]{Jos\'e M. Sigarreta$^{(1)}$}
\address{Faculty of Mathematics, Autonomous University of Guerrero,
Carlos E. Adame No.54 Col. Garita, 39650 Acalpulco Gro., Mexico}
\email{jsmathguerrero@gmail.com}

\date{\today}

\maketitle{}

%\centerline{{\bf MinorGraphs18.TEX}}

\begin{abstract}
If $X$ is a geodesic metric space and $x_1,x_2,x_3\in X$, a {\it
geodesic triangle} $T=\{x_1,x_2,x_3\}$ is the union of the three
geodesics $[x_1x_2]$, $[x_2x_3]$ and $[x_3x_1]$ in $X$. The space
$X$ is $\d$-\emph{hyperbolic} $($in the Gromov sense$)$ if any side
of $T$ is contained in a $\d$-neighborhood of the union of the two
other sides, for every geodesic triangle $T$ in $X$.
The study of hyperbolic graphs is an interesting topic since the
hyperbolicity of a geodesic metric space
is equivalent to the hyperbolicity of a graph related to it.
In the context of graphs, to remove and to contract an edge of a graph are natural transformations.
The main aim in this work is to obtain quantitative information about the distortion of the hyperbolicity constant of the graph $G \setminus e$ (respectively, $\,G/e\,$) obtained from the graph $G$ by deleting (respectively, contracting) an arbitrary edge $e$ from it.
This work provides information about the hyperbolicity constant of minor graphs.
\end{abstract}

{\it Keywords:}  Hyperbolic graph; minor graph; edge contraction; deleted edge.

{\it AMS 2010 Subject Classification numbers:}   05C63; 05C75;  05A20.

\section{Introduction}

Hyperbolic spaces play an important role in geometric
group theory and in the geometry of negatively curved
spaces (see \cite{ABCD, GH, G1}).
The concept of Gromov hyperbolicity grasps the essence of negatively curved
spaces like the classical hyperbolic space, Riemannian manifolds of
negative sectional curvature bounded away from $0$, and of discrete spaces like trees
and the Cayley graphs of many finitely generated groups. It is remarkable
that a simple concept leads to such a rich
general theory (see \cite{ABCD, GH, G1}).

The first works on Gromov hyperbolic spaces deal with
finitely generated groups (see \cite{G1}). %G2
Initially, Gromov spaces were applied to the study of automatic groups in the science of computation
(see, \emph{e.g.}, \cite{O}); indeed, hyperbolic groups are strongly geodesically automatic,
\emph{i.e.}, there is an automatic structure on the group \cite{Cha}.

The concept of hyperbolicity appears also in discrete mathematics, algorithms
and networking. For example, it has been shown empirically
in \cite{ShTa} that the internet topology embeds with better accuracy
into a hyperbolic space than into an Euclidean space
of comparable dimension; the same holds for many complex networks, see \cite{KPKVB}.
A few algorithmic problems in
hyperbolic spaces and hyperbolic graphs have been considered
in recent papers (see \cite{ChEs, Epp, GaLy, Kra}).
Another important
application of these spaces is the study of the spread of viruses through on the
internet (see \cite{K21,K22}).
Furthermore, hyperbolic spaces are useful in secure transmission of information on the
network (see \cite{K27,K21,K22,NS}).
The hyperbolicity has also been applied in the field of random networks. For example, it was shown in \cite{Sha1,Sha2} that several types of small-world networks and networks
with given expected degrees are not hyperbolic in some sense.

The study of Gromov hyperbolic graphs is a subject of increasing interest in graph theory; see, \emph{e.g.},
\cite{BC,BRS,BRSV2,BRST,%BPK,BHB1,CDR,
CCCR,CPeRS,%CPoRS,
CRS,CRSV,CD,%CDEHV,K50,
K27,K21,K22,%K23,K24,K56,
KPKVB,MRSV,%MRSV2,
NS,%PeRSV,PRST,
PRSV,%PRT1,
PT,R,%RS,
RSVV,%RT1,S,S2,
T,WZ}
and the references therein.

We say that the curve $\g$ in a metric space $X$ is a
\emph{geodesic} if we have $L(\g|_{[t,s]})=d(\g(t),\g(s))=|t-s|$ for every $s,t\in [a,b]$
(then $\gamma$ is equipped with an arc-length parametrization).
The metric space $X$ is said \emph{geodesic} if for every couple of points in
$X$ there exists a geodesic joining them; we denote by $[xy]$
any geodesic joining $x$ and $y$; this notation is ambiguous, since in general we do not have uniqueness of
geodesics, but it is very convenient.
Consequently, any geodesic metric space is connected.
If the metric space $X$ is
a graph, then the edge joining the vertices $u$ and $v$ will be denoted by $[u,v]$.

In order to consider a graph $G$ as a geodesic metric space, we identify (by an isometry)
any edge $[u,v]\in E(G)$ with the interval $[0,1]$ in the real line;
then the edge $[u,v]$ (considered as a graph with just one edge)
is isometric to the interval $[0,1]$.
Thus, the points in $G$ are the vertices and, also, the points in the interior
of any edge of $G$.
In this way, any connected graph $G$ has a natural distance
defined on its points, induced by taking shortest paths in $G$,
and we can see $G$ as a metric graph.
Throughout this paper, $G=(V,E)$ denotes a connected graph such that every edge has length $1$
and $V\neq \emptyset$.
These properties guarantee that any connected graph is a geodesic metric space.
We will work both with simple and non-simple graphs. The difference between them is that the first type does not contain either loops or multiple edges.
Although the operation of contraction is naturally defined for non-simple graphs, simple graphs are a more usual context in the study of hyperbolicity.

If $X$ is a geodesic metric space and $x_1,x_2,x_3\in X$, the union
of three geodesics $[x_1 x_2]$, $[x_2 x_3]$ and $[x_3 x_1]$ is a
\emph{geodesic triangle} that will be denoted by $T=\{x_1,x_2,x_3\}$
and we will say that $x_1,x_2$ and $x_3$ are the vertices of $T$; it
is usual to write also $T=\{[x_1x_2], [x_2x_3], [x_3x_1]\}$. We say
that $T$ is $\d$-{\it thin} if any side of $T$ is contained in the
$\d$-neighborhood of the union of the two other sides. We denote by
$\d(T)$ the sharp thin constant of $T$, i.e. $ \d(T):=\inf\{\d\ge 0:
\, T \, \text{ is $\d$-thin}\,\}. $ The space $X$ is
$\d$-\emph{hyperbolic} $($or satisfies the {\it Rips condition} with
constant $\d)$ if every geodesic triangle in $X$ is $\d$-thin. We
denote by $\d(X)$ the sharp hyperbolicity constant of $X$, i.e.
$\d(X):=\sup\{\d(T): \, T \, \text{ is a geodesic triangle in
}\,X\,\}.$
We say that $X$ is
\emph{hyperbolic} if $X$ is $\d$-hyperbolic for some $\d \ge 0$; then $X$ is hyperbolic if and only if
$ \d(X)<\infty$.
If we have a triangle with two
identical vertices, we call it a ``bigon". Obviously, every bigon in
a $\d$-hyperbolic space is $\d$-thin.

Trivially, any bounded
metric space $X$ is $(\diam X)$-hyperbolic.
A normed linear space is hyperbolic if and only if it has dimension one.
%A geodesic space is $0$-hyperbolic if and only if it is a metric tree.
If a complete Riemannian manifold is simply connected and its sectional curvatures satisfy
$K\leq c$ for some negative constant $c$, then it is hyperbolic.
See the classical references \cite{ABCD,GH} in order to find further results.
We want to remark that the main examples of hyperbolic graphs are the trees.
In fact, the hyperbolicity constant of a geodesic metric space can be viewed as a measure of
how ``tree-like'' the space is, since those spaces $X$ with $\delta(X) = 0$ are precisely the metric trees.
This is an interesting subject since, in
many applications, one finds that the borderline between tractable and intractable
cases may be the tree-like degree of the structure to be dealt with
(see, \emph{e.g.}, \cite{CYY}).

An graph $H$ is a \emph{minor} of a graph $G$ if a graph isomorphic to $H$ can be obtained from $G$ by contracting some edges,
deleting some edges, and deleting some isolated vertices.
Minor graphs is an interesting class of graphs.
This topic started with one well-known result on planar graph, independently proved by  Kuratowski and Wagner, which says that a graph is planar if and only if it do not include as a minor the complete graph $K_5$ nor the complete bipartite graph $K_{3,3}$ (see \cite{Ku,W}).

For a general graph or a general geodesic metric space
deciding whether or not a space is
hyperbolic is usually very difficult.
Therefore, it is interesting to study the invariance of the hyperbolicity of graphs under
appropriate transformations.
The invariance of the hyperbolicity under some natural transformations on graphs have been studied in previous papers, for instance, removing edges of a graph is studied in \cite{BRSV2,CPeRS}, the line graph of a graph in \cite{CRS,CRSV}, the dual of a planar graph in \cite{CPoRS,PRSV} and the complement of a graph in \cite{BRST}.
%Moreover, the hyperbolicity of some product graphs have been characterized: in \cite{CCCR,CDR,CRS2,MRSV2} the authors characterize in a simple way the hyperbolicity of strong product of graphs, lexicographic product of graphs, graph join and corona, and Cartesian product of graphs, respectively.
% %in \cite{BCRS} it is shown that hyperbolic graphs are path-chordal graphs.

To remove and to contract edges of a graph are also very natural transformations.
In \cite{CPeRS} the authors study the distortion of the hyperbolicity constant of the graph $G \setminus e$ obtained from a graph $G$ by removing an edge $e$.
These results allow to obtain the characterization, in a quantitative way, of the hyperbolicity of many graphs in terms of local hyperbolicity.
In this work we obtain the invariance of the hyperbolicity under the contraction of a finite number of edges. Besides, we obtain quantitative information about the distortion of the hyperbolicity constant of the graph $G/e$ obtained from
the graph $G$ by contracting an arbitrary edge $e$ from it for simple and non-simple graphs, in Sections \ref{Sect:constract} and \ref{Sect:non-simple}, respectively.
In Sections \ref{Sect:minor} and \ref{Sect:non-simple} we obtain the invariance of the hyperbolicity on many minor graphs as a consequence of these results
for simple and non-simple graphs, respectively.

\section{Hyperbolicity and edge contraction on simple graphs}\label{Sect:constract}

In this section we study the distortion of the hyperbolicity constant by contraction of one edge in any simple graph.
If $G$ is a graph and $e:=[A,B]\in E(G)$, we denote by $G/e$ the graph obtained by contracting the edge $e$ from it
(we remove $e$ from $G$ while simultaneously we merge $A$ and $B$).

\smallskip

Along this work we will denote by $V_e$ the vertex in $G/e$ obtained by identifying $A$ and $B$.

\smallskip

Note that any vertex $v\in V(G)\setminus\{A,B\}$ can be seen as a vertex in $V(G/e)$. Also we can write any edge in $E(G/e)$ in terms of its endpoints, but we write $V_e$ instead of $A$ or $B$.
If $[v,A]$ and $[v,B]$ are edges of $G$ for some $v\in V(G)$, then we replace both edges by a single edge $[v,V_e]\in G/e$ (recall that we work with simple graphs), see Figure \ref{fig:OutlineH}.

We define the map $h: G \rightarrow G/e$ in the following way:
if $x$ belongs to the edge $e$, then $h(x):=V_e$;
if $x\in G$ does not belong to $e$, then $h(x)$ is the ``natural inclusion map".
Clearly $h$ is onto, \emph{i.e.}, $h(G)=G/e$. %Besides, the restriction of $h$ to the complement of $e$ in $G$ is an injective map.
Besides, $h$ is an injective map in the union of edges without endpoints in $\{A,B\}$.

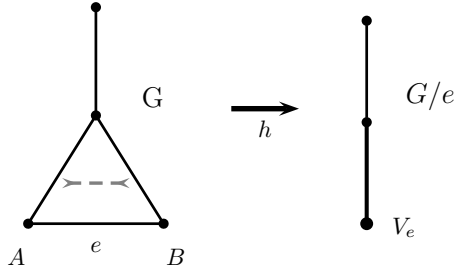
\begin{figure}[h]
  \centering
  \scalebox{0.9}
  {\begin{pspicture}(-1.2,-1)(5,3.7)
  \psline[linewidth=0.04cm,linecolor=black]{*-*}(-1,0)(0,1.6)(0,3.2)
  \psline[linewidth=0.04cm,linecolor=black]{*-*}(1,0)(0,1.6)
  \psline[linewidth=0.04cm,linecolor=black]{-}(-1,0)(1,0)
  \psline[linewidth=0.05cm,linecolor=gray,linestyle=dashed]{>-<}(-0.5,0.6)(0.5,0.6)
  \uput[-90](0,-0.1){$e$}
  \uput[-120](-1,-0.2){$A$}
  \uput[-60](1,-0.2){$B$}
  \uput[20](0.5,1.75){\Large{G}}
  %hasta aquí T_E
  \psline[linewidth=0.04cm,fillcolor=black,dotsize=0.07055555cm 2.0]{*-*}(4,1.5)(4,3)
  \psline[linewidth=0.06cm,fillcolor=black,dotsize=0.07055555cm 2.0]{-*}(4,1.5)(4,0)
  %\uput[0](4.1,.5){$e'$}
  \uput[-10](4.2,0){$V_e$}
  \uput[20](4.4,1.75){\Large{$G/e$}}
  %hasta aquí Tripod
  \psline[linewidth=0.08cm,fillcolor=black]{->}(2,1.7)(3,1.7)
  \uput[-90](2.5,1.7){$h$}
  %function f_{xyz}
  \end{pspicture}}
  \caption{The map $h$.}
  \label{fig:OutlineH}
\end{figure}

Denote by $J(G)$ the set of vertices and midpoints of edges in $G$.

As usual, by \emph{cycle} we mean a simple closed curve, \emph{i.e.}, a path with different vertices,
unless the last one, which is equal to the first vertex.

Given $e\in E(G)$, denote by $\mathcal{C}(G,e)$ the set of cycles in $G$ with length $3$ containing $e$.

\begin{proposition}\label{l:quasi_isom}
  Let $G$ be a graph and $e\in E(G)$. Then
   \begin{equation}\label{eq:+3/2}
     d_{G/e}\big(h(x),h(y)\big) \le d_{G}(x,y) \le d_{G/e}\big(h(x),h(y)\big) + \frac32\,, \quad \forall x,y\in G.
   \end{equation}
  Furthermore, if $y\in J(G)$ or $x,y$ are not contained in the same cycle $C \in \mathcal{C}(G,e)$, then
   \begin{equation}\label{eq:+1}
     d_{G/e}\big(h(x),h(y)\big) \le d_{G}(x,y) \le d_{G/e}\big(h(x),h(y)\big) + 1.
   \end{equation}
\end{proposition}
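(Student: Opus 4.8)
The plan is to prove the two displayed inequalities separately, the left-hand bounds being immediate and the right-hand bounds requiring a lifting argument.

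For the left inequality in both \eqref{eq:+3/2} and \eqref{eq:+1} I would first observe that $h$ is $1$-Lipschitz: it restricts to an isometry on the interior of every edge $f\neq e$ (in particular, when two edges $[v,A]$ and $[v,B]$ are identified into a single edge $[v,V_e]$, each of them is still mapped isometrically onto $[v,V_e]$), while it collapses $e$ onto the point $V_e$. Hence the $h$-image of a geodesic $[xy]$ in $G$ is a curve joining $h(x)$ and $h(y)$ in $G/e$ of length at most $d_G(x,y)$, and therefore $d_{G/e}(h(x),h(y))\le d_G(x,y)$.

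For the right inequality I would take a geodesic $\sigma=[h(x)h(y)]$ in $G/e$ of length $L:=d_{G/e}(h(x),h(y))$ and lift it to a path in $G$ from $x$ to $y$ whose length is controlled by $L$. The basic structural fact is that every edge of $G/e$ not incident to $V_e$ is the image of a unique edge of $G$ and lifts isometrically, whereas an edge $[v,V_e]$ comes either from a single edge $[v,A]$ or $[v,B]$, or from both $[v,A]$ and $[v,B]$ at once; the latter happens \emph{exactly} when $\{A,B,v\}$ spans a cycle of $\mathcal{C}(G,e)$. Since a geodesic is injective, $\sigma$ meets $V_e$ at most once, so there are only three sources of extra length when lifting: \emph{(i)} $\sigma$ passes through $V_e$ in its interior, which may force the lift to traverse the contracted edge $e$ and costs at most $1$; \emph{(ii)} $h(x)$ or $h(y)$ equals $V_e$, i.e. $x$ or $y$ lies on $e$, in which case joining that endpoint to the appropriate lift of the first (last) edge of $\sigma$ costs at most $1$; and \emph{(iii)} $\sigma$ runs inside a single merged edge $[v,V_e]$ while $x$ and $y$ actually sit on the two different branches $[v,A]$ and $[v,B]$, so that the lift must switch branches.

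The main obstacle, and the point where the constant $\tfrac32$ rather than $1$ appears, is case \emph{(iii)}. Writing $a,b$ for the distances from $h(x),h(y)$ to $V_e$ along $[v,V_e]$, the two branches place $x$ and $y$ on $[v,A]$ and $[v,B]$ at the corresponding parameters; computing $d_G(x,y)$ as the shorter of the route through the common vertex $v$ and the route through $e$, together with $L=|a-b|$, reduces everything to the elementary estimate
\[
\min\{\,2-a-b,\ 1+a+b\,\}-|a-b|\ \le\ \frac{3}{2},
\]
whose maximum $\tfrac32$ is attained at $a=b=\tfrac14$ (this also exhibits sharpness). In every other configuration the extra length is at most $1$, so \eqref{eq:+3/2} follows. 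Finally, for \eqref{eq:+1} I would verify that the expensive case \emph{(iii)} is excluded under either hypothesis: if $x$ and $y$ are not contained in a common cycle $C\in\mathcal{C}(G,e)$ they can never lie on the two branches of one merged edge, while if $y\in J(G)$ then its branch parameter satisfies $b\in\{0,\tfrac12,1\}$, for which the same elementary estimate drops to at most $1$. Combining this with the bounds for \emph{(i)} and \emph{(ii)} yields the sharper inequality.
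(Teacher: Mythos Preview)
Your proposal is correct and follows the same lifting strategy as the paper: both obtain the lower bound from the fact that $h$ is $1$-Lipschitz, and both lift a geodesic of $G/e$ back to $G$ at a cost of at most $1$ except in the ``two-branch'' situation inside a triangle of $\mathcal{C}(G,e)$. The only difference is organizational: the paper disposes of that last situation by the crude bound $d_G(x,y)\le\diam C=3/2$ and then, for $y\in J(G)$, by a direct case analysis on the position of $y$ (its Cases~1 and~2.1--2.4), whereas you obtain both conclusions from the single parametric estimate $\min\{2-a-b,\,1+a+b\}-|a-b|$, checking that it is at most $3/2$ in general and at most $1$ when $b\in\{0,\tfrac12,1\}$.
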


\begin{proof}
Fix $x,y \in G$. Let $[xy]_G$ be a geodesic in $G$ joining $x$ and $y$.
Clearly, $h([xy]_G)$ is a path joining $h(x)$ and $h(y)$ with length at most $L([xy]_G)$, thus, we obtain $d_{G/e}\big(h(x),h(y)\big) \le d_{G}(x,y)$.
Hence, the first inequalities in \eqref{eq:+3/2} and \eqref{eq:+1} hold.

Let $\g'$ be a geodesic in $G/e$ joining $h(x)$ and $h(y)$.
If $x,y$ are not contained in the same cycle $C \in \mathcal{C}(G,e)$,
then there is a path $\g$ in $G$ with $h(\g)=\g'$ and $L(\g)\le L(\g')+1$ since $e$ (or a subset of $e$) can be included in $\g$.
Therefore, $d_{G}(x,y) \le L(\g)\le L(\g')+1 = d_{G/e}\big(h(x),h(y)\big) + 1$.

If $x,y \in C \in \mathcal{C}(G,e)$, then
$$
d_{G}(x,y)
\le d_{C}(x,y)
\le \frac12\, L(C)
= \frac32
\le d_{G/e}\big(h(x),h(y)\big) + \frac32\,.
$$

Finally, consider $x,y \in C \in \mathcal{C}(G,e)$ with $y \in J(G)$.

We deal with several cases.

\smallskip

\emph{Case} $1$.
If $d_{G/e}\big(h(x),h(y)\big) \ge 1/2$, then
$$
d_{G}(x,y)
\le \frac12\, L(C)
= \frac12 +1
\le d_{G/e}\big(h(x),h(y)\big) + 1.
$$

\emph{Case} $2$.
Assume that $d_{G/e}\big(h(x),h(y)\big) < 1/2$.

\smallskip

\emph{Case} $2.1$. $y$ is the midpoint of $e$.
If $x\in e$, then $d_{G}(x,y) \le 1/2$.
If $x\notin e$, then $d_{G}(x,y) = d_{G/e}\big(h(x),h(y)\big) + 1/2$.

\smallskip

\emph{Case} $2.2$. $y\in \{A,B\}$.
If $x\in e$, then $d_{G}(x,y) \le 1$.
If $x\notin e$, then we have either
$d_{G}(x,y) = d_{G/e}\big(h(x),h(y)\big)$ or $d_{G}(x,y) = d_{G/e}\big(h(x),h(y)\big) + 1$.

\smallskip

\emph{Case} $2.3$. $y\in V(C) \setminus \{A,B\}$.
Thus $d_{G}(x,y) = d_{G/e}\big(h(x),h(y)\big)$.

\smallskip

\emph{Case} $2.4$. $y\in J(G) \setminus \{V(C) \cup e\}$.
We have either
$d_{G}(x,y) = d_{G/e}\big(h(x),h(y)\big)$, $d_{G}(x,y) = d_{G/e}\big(h(x),h(y)\big) + 1$ or $d_{G}(x,y) = 1 - d_{G/e}\big(h(x),h(y)\big)\le 1$.

This finishes the proof.
\end{proof}

Note that the inequalities in \eqref{eq:+3/2} and \eqref{eq:+1} are attained.
If $G$ is any graph, $[v,w]\in E(G)$ and $\{v,w\}\cap \{A,B\} = \emptyset$, then $d_{G/e}(h(v),h(w))=1=d_G(v,w)$.
Consider a cycle graph $G=C_3$ and $x,y\in C_3$ such that $x\neq y$ and there is $v\in V(C_3)$ with $d_{C_3}(x,v)=d_{C_3}(v,y)=3/4$.
Let $e$ be the edge in $C_3$ with $x,y\notin e$.
Hence, we have $d_{C_3}(x,y)=3/2$ and $h(x)=h(y)$.
Finally, consider a cycle graph $G=C_3$, $x_0,y_0$ two different midpoints of edges in $C_3$ and $e \in E(C_3)$ with $x_0,y_0\notin e$.
Thus, $d_{C_3}(x_0,y_0)=1$ and $h(x_0)=h(y_0)$.

\medskip

The previous lemma has the following consequence about the continuity of $h$.

\begin{proposition}\label{prop:Lips}
 The map $h$ is an $1$-Lipschitz continuous function.
\end{proposition}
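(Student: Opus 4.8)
The plan is to deduce the statement immediately from the left-hand inequality in Proposition \ref{l:quasi_isom}. Recall that a map $f$ between two metric spaces $X$ and $Y$ is $1$-Lipschitz precisely when $d_Y\big(f(x),f(y)\big) \le d_X(x,y)$ for all $x,y \in X$. The first inequality in \eqref{eq:+3/2}, namely $d_{G/e}\big(h(x),h(y)\big) \le d_{G}(x,y)$ for every $x,y \in G$, is exactly this condition applied with $X=G$, $Y=G/e$ and $f=h$. Hence $h$ is $1$-Lipschitz with no further work required.

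It then remains only to observe that the ``continuous'' in the statement is automatic: every $1$-Lipschitz map is uniformly continuous, since given $\e>0$ one may take $\d=\e$, and $d_{G}(x,y)<\d$ forces $d_{G/e}\big(h(x),h(y)\big) \le d_{G}(x,y) < \e$. Thus the topological continuity of $h$ follows formally from the metric bound and needs no independent argument.

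For completeness I would recall where the underlying inequality comes from, as it is the only place any genuine content enters. Given $x,y\in G$ and a geodesic $[xy]_G$ in $G$ joining them, the image $h\big([xy]_G\big)$ is a path in $G/e$ from $h(x)$ to $h(y)$, and the definition of $h$ (collapsing $e$ to the single vertex $V_e$ and acting as the inclusion elsewhere) can only decrease length, so $L\big(h([xy]_G)\big) \le L\big([xy]_G\big) = d_{G}(x,y)$. Since $d_{G/e}$ is the infimum of lengths of paths joining the endpoints, this yields $d_{G/e}\big(h(x),h(y)\big) \le d_{G}(x,y)$, as used above. There is no real obstacle here: the proposition is an immediate corollary of Proposition \ref{l:quasi_isom}, the substantive estimates having already been carried out in that result.
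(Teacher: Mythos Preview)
Your proof is correct and matches the paper's approach exactly: the paper presents this proposition without proof, simply stating that it is a consequence of Proposition~\ref{l:quasi_isom}, which is precisely what you do. Your additional remarks on continuity and on the origin of the inequality are sound but go beyond what the paper itself provides.
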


Let $(X,d_X)$ and $(Y,d_Y)$  be two metric spaces. A map $f: X\longrightarrow Y$ is said to be
an $(\alpha, \beta)$-\emph{quasi-isometric embedding}, with constants $\alpha\geq 1,\
\beta\geq 0$ if for every $x, y\in X$:
$$
\alpha^{-1}d_X(x,y)-\beta\leq d_Y(f(x),f(y))\leq \alpha d_X(x,y)+\beta.
$$
The function $f$ is $\varepsilon$-\emph{full} if
for each $y \in Y$ there exists $x\in X$ with $d_Y(f(x),y)\leq \varepsilon$.

\smallskip

A map $f: X\longrightarrow Y$ is said to be
a \emph{quasi-isometry} if there exist constants $\alpha\geq 1,\
\beta,\varepsilon \geq 0$ such that $f$ is an $\varepsilon$-full
$(\alpha, \beta)$-quasi-isometric embedding.

\smallskip

A fundamental property of hyperbolic spaces is the following:

\begin{theorem}[Invariance of hyperbolicity]\label{invarianza}
Let $f:X\longrightarrow Y$ be an $(\alpha,\beta)$-quasi-isometric embedding between the geodesic
metric spaces $X$ and $Y$.
If $Y$ is hyperbolic, then $X$ is hyperbolic.
Furthermore, if $Y$ is $\d$-hyperbolic, then $X$ is $\d'$-hyperbolic, where $\d'$ is a constant which just depends on
$\alpha,\beta,\d$.

Besides, if $f$ is $\e$-full for some $\e\ge0$ (a quasi-isometry), then $X$ is
hyperbolic if and only if $Y$ is hyperbolic.
Furthermore, if $X$ is $\d$-hyperbolic, then $Y$ is $\d'$-hyperbolic, where $\d'$ is a constant which just depends on
$\alpha,\beta,\d,\e$.
\end{theorem}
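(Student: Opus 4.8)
The plan is to reduce the whole statement to the classical \emph{stability of quasi-geodesics} (the Morse lemma) in hyperbolic spaces: in a $\d$-hyperbolic geodesic space, every $(\a,\b)$-quasi-geodesic lies within Hausdorff distance $H=H(\a,\b,\d)$ of any geodesic joining its endpoints. This is the one deep ingredient; everything else is bookkeeping with the two defining inequalities of a quasi-isometric embedding. I would quote this stability result from \cite{GH} or \cite{ABCD}, noting the (standard but technical) point that one may first replace the quasi-geodesic by a continuous one with comparable constants, so that the lemma applies.

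For the first assertion I would start from an arbitrary geodesic triangle $T=\{x_1,x_2,x_3\}$ in $X$ and a point $p\in[x_1x_2]$, and bound the distance from $p$ to $[x_2x_3]\cup[x_3x_1]$. Since $f$ is an $(\a,\b)$-quasi-isometric embedding and each side of $T$ is a geodesic, each image $f([x_ix_j])$ is an $(\a,\b)$-quasi-geodesic in $Y$, so by the Morse lemma it lies in the $H$-neighborhood of a genuine geodesic $[f(x_i)f(x_j)]$ in $Y$, and conversely. As $Y$ is $\d$-hyperbolic, the triangle $\{f(x_1),f(x_2),f(x_3)\}$ is $\d$-thin; chaining the inclusions gives a point $q\in[x_2x_3]\cup[x_3x_1]$ with $d_Y(f(p),f(q))\le 2H+\d$. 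Finally the lower bound $\a^{-1}d_X(p,q)-\b\le d_Y(f(p),f(q))$ yields $d_X(p,q)\le \a(2H+\d+\b)=:\d'$, a constant depending only on $\a,\b,\d$. Since $p$ and $T$ were arbitrary, $X$ is $\d'$-hyperbolic.

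For the second assertion I would use the $\e$-fullness to construct a quasi-inverse $g:Y\to X$, choosing for each $y\in Y$ a point $g(y)\in X$ with $d_Y\big(f(g(y)),y\big)\le\e$. A routine computation with the two quasi-isometry inequalities shows $g$ is itself an $(\a',\b')$-quasi-isometric embedding with $\a',\b'$ depending only on $\a,\b,\e$, and that $f\circ g$, $g\circ f$ are at bounded distance from the respective identities. Applying the first assertion to $g$ then gives: if $X$ is $\d$-hyperbolic, $Y$ is $\d'$-hyperbolic with $\d'$ depending only on $\a,\b,\d,\e$. Combined with the first assertion, this yields the stated equivalence.

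The main obstacle is the Morse lemma itself, namely securing the stability constant $H=H(\a,\b,\d)$ with finite (indeed explicit) dependence on the parameters; without such control on $H$ the derived $\d'$ need not exist. Once $H$ is in hand, the remainder is an exercise in chaining neighborhood inclusions and invoking the two quasi-isometry inequalities, together with the standard continuity reduction for quasi-geodesics mentioned above.
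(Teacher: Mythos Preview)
Your sketch is correct and is precisely the standard argument one finds in the classical references \cite{GH,ABCD}: push a geodesic triangle in $X$ forward to a quasi-geodesic triangle in $Y$, invoke the Morse lemma to straighten the sides, use $\d$-thinness in $Y$, and pull back via the lower quasi-isometry inequality; for the converse direction, build a quasi-inverse from $\e$-fullness and apply the first part.

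There is, however, nothing to compare against here: the paper does not prove Theorem~\ref{invarianza}. It is stated as a ``fundamental property of hyperbolic spaces'' and used as a black box (e.g.\ in the proof of Theorem~\ref{t:quali}), with the implicit reference being the monographs \cite{GH,ABCD} cited immediately before and after. So your proposal is not an alternative to the paper's argument; it is simply the classical proof that the paper is quoting.
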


Using the invariance of hyperbolicity (Theorem \ref{invarianza}), we can obtain the main qualitative result in this section.

\begin{theorem}
\label{t:quali}
Let $G$ be a graph and $e\in E(G)$. Then
$G$ is hyperbolic if and only if $G/e$ is hyperbolic.
Furthermore, if $G$ (respectively, $G/e$) is
$\d$-hyperbolic, then $G/e$ (respectively, $G$) is $\d'$-hyperbolic, where $\d'$ is a
constant which just depends on $\d$.
\end{theorem}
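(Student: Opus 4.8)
The plan is to deduce Theorem \ref{t:quali} directly from the quasi-isometry machinery already assembled in this section. By Proposition \ref{l:quasi_isom}, the map $h: G \to G/e$ satisfies
\[
  d_{G/e}\big(h(x),h(y)\big) \le d_{G}(x,y) \le d_{G/e}\big(h(x),h(y)\big) + \tfrac32
\]
for all $x,y\in G$. Reading this as an $(\alpha,\beta)$-quasi-isometric embedding with $\alpha=1$ and $\beta=\tfrac32$ is immediate: the right inequality gives $d_{G/e}(h(x),h(y)) \ge d_G(x,y) - \tfrac32$, and the left inequality gives $d_{G/e}(h(x),h(y)) \le d_G(x,y) \le d_G(x,y)+\tfrac32$, so both defining inequalities hold. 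Thus $h$ is an $(1,\tfrac32)$-quasi-isometric embedding.

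Next I would check that $h$ is $\e$-full for a suitable $\e$, so that $h$ is an actual quasi-isometry and the ``if and only if'' (both directions) applies. Since $h$ is onto (this is stated in the construction of $h$: $h(G)=G/e$), for every point $z\in G/e$ there exists $x\in G$ with $h(x)=z$, so $h$ is $0$-full. Hence $h$ is an $\e$-full $(1,\tfrac32)$-quasi-isometry with $\e=0$.

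With $h$ identified as a quasi-isometry between the geodesic metric spaces $G$ and $G/e$, I invoke Theorem \ref{invarianza} (Invariance of hyperbolicity). The ``furthermore, if $f$ is $\e$-full'' clause states precisely that $X$ is hyperbolic if and only if $Y$ is hyperbolic, and that the hyperbolicity constant of one is controlled by a constant depending only on $\alpha,\beta,\d,\e$. Applying this with $X=G$, $Y=G/e$, $\alpha=1$, $\beta=\tfrac32$, $\e=0$ yields at once that $G$ is hyperbolic iff $G/e$ is, and that if $G$ is $\d$-hyperbolic then $G/e$ is $\d'$-hyperbolic with $\d'$ depending only on $\d$ (the dependence on $\alpha,\beta,\e$ disappears because these are fixed universal constants), and symmetrically in the other direction. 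This establishes the claim.

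The only genuine subtlety — and what I expect to be the main (though modest) obstacle — is the verification that $G$ and $G/e$ are honestly geodesic metric spaces to which Theorem \ref{invarianza} applies, and that $h$ meets the literal hypotheses of that theorem. Both $G$ and $G/e$ are connected graphs with unit-length edges, hence geodesic metric spaces by the conventions fixed in the introduction, so this is routine. The genuinely quantitative work has already been discharged in Proposition \ref{l:quasi_isom}; the present theorem is essentially a bookkeeping consequence, and no further estimation is required.
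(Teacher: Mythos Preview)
Your proposal is correct and follows essentially the same approach as the paper's own proof: the paper simply observes that Proposition \ref{l:quasi_isom} makes $h$ a $0$-full $(1,3/2)$-quasi-isometry and then invokes Theorem \ref{invarianza}. Your write-up is a more detailed unpacking of exactly this argument, with no substantive differences.
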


\begin{proof}
Proposition \ref{l:quasi_isom} gives that $h$ is a $0$-full $(1,3/2)$-quasi-isometry from $G$ onto
$G/e$, and  we obtain the result by Theorem \ref{invarianza}.
\end{proof}

One can expect that the edge contraction is a monotone transformation for the hyperbolicity constant,
\emph{i.e.}, the hyperbolicity constant always decreases by edge contraction
(for instance, if $e$ is any edge of the cycle graph $C_3$, then $C_3/e$ is the path graph $P_2$ and $\d(P_2)=0<3/4=\d(C_3)$;
if $e$ is any edge of the cycle graph $C_n$ with $n\ge 4$, then $C_n/e$ is the cycle graph $C_{n-1}$ and $\d(C_{n-1})=(n-1)/4<n/4=\d(C_n)$).
However, the following example provides a family of graphs such that the hyperbolicity constant increases by contracting certain edge.
Recall that the \emph{girth} of a graph $G$ is the minimum of the lengths of the cycles in $G$.

\begin{example}\label{ex:wn-10}
In \cite[Theorem 11]{RSVV}, the authors obtain the precise value of the hyperbolicity constant of the wheel graph
with $n$ vertices $W_n$: $\d(W_4)=\d(W_5)=1$, $\d(W_n)=3/2$ for every $7\le n\le 10$,
and $\d(W_n)=5/4$ for $n=6$ and for every $n\ge 11$.
Note that we can obtain $W_n$ from $W_{n+1}$ by edge contraction, so, we have that $\d(W_{11})=5/4$ and $\d(W_{10})=3/2$.
Furthermore, in \cite{CRS2} the authors obtain the value of the hyperbolicity constant of the graph join of $E$
(the empty graph with just one vertex) and every graph.
Thus, taking $G$ as a graph join of $E$ and any graph $G^*$ with girth $10$, then $\d(G)=5/4$,
but contracting an edge $e$ belonging to any cycle in $G^{*}$ with length 10,
$G/e$ is the graph join of $E$ and other graph with girth $9$, so \cite[Corollary 3.23]{CRS2} gives $\d(G/e)=3/2$.
\end{example}

Other aim in this work is to obtain quantitative relations between $\d(G/e)$ and $\d(G)$.
Since the proofs of these inequalities are long, in order to make the arguments more transparent, we collect some results in technical lemmas.

\smallskip

For any simple (non-selfintersecting)
path $\g'$ joining two different points in $G/e$ which are not contained in an edge $e_0$ with $h^{-1}(e_0) \in \mathcal{C}(G,e)$,
we define $\G(\g')$ as the set of paths $\g$ in $G$ such that $h(\g)=\g'$ and
$$
\G_0(\g') = \big\{ g\in \G(\g')\,|\,\, L(g)\le L(\g) \;\;\, \forall \,\g\in \G(\g') \big\}.
$$
Note that any curve in $\G_0(\g')$ is a simple path.
We denote by $h_0^{-1}(\g')$ any fixed choice of curve in $\G_0(\g')$.
If $t'\in \g'\setminus {V_e}$ we denote by $t=h_0^{-1}(t')$ the point in $h_0^{-1}(\g')$ such that $h(t)=t'$
(note that, since $\g'$ is a simple path, any $t'\in \g'\setminus {V_e}$ defines an unique $t\in h_0^{-1}(\g')$).
If $t'=V_e \in \g'$, then $h_0^{-1}(V_e)=h^{-1}(V_e)=e$.
Hence, $h_0^{-1}(t')=h^{-1}(t') \cap h_0^{-1}(\g')$.
Furthermore, for any geodesic $\g'$ in $G/e$ such that $V_e\notin \g'$ we have that
$$
h\!\mid_{h^{-1}(\g')} : h^{-1}(\g') \longrightarrow \g'
$$
is a bijective map and $\g = h^{-1}(\g') = h_0^{-1}(\g')$ is a geodesic in $G$ with $L(\g)=L(\g')$.

\begin{lemma}\label{l:dist2_G}
   Let $G$ be a graph and $e \in E(G)$.
   Let $x,y\in G\setminus\{e\}$ such that there is no $C \in \mathcal{C}(G,e)$ with $x,y \in C$.
   Assume that there are two geodesics $\g_G$ and $\g_{G/e}$ in $G$ and $G/e$,
   respectively, joining $x,y$ and $h(x),h(y)$, respectively, such that $L(\g_G)=L(\g_{G/e})=L(h(\g_G))$ and $e\subset h_0^{-1}( \g_{G/e} )$. Then we have
   \begin{equation}\label{eq_dist2_xyG}
     d_{G/e}(h(\a),\g_{G/e} )\le \d(G) \quad \forall\ \a\in \g_{G}
   \end{equation}
   and
   \begin{equation}\label{eq_dist2_xyG/e}
     d_{G/e}\big( \a',h(\g_{G}) \big)\le 2 \d(G) \quad \forall\ \a'\in \g_{G/e}.
   \end{equation}
\end{lemma}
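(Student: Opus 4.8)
The plan is to transfer both inequalities into $G$ itself and compare the geodesic $\g_G$ with the lifted path $P:=h_0^{-1}(\g_{G/e})$. The hypotheses $L(\g_G)=L(\g_{G/e})=L(h(\g_G))$ and $e\subset h_0^{-1}(\g_{G/e})$ say that $\g_{G/e}$ runs through $V_e$ and that $P$ splits as $P=\eta_1\cup e\cup\eta_2$, where (up to swapping $A,B$) $\eta_1$ is a geodesic in $G$ from $x$ to $A$, $\eta_2$ a geodesic from $B$ to $y$, and $h(\eta_1),h(\eta_2)$ are exactly the two halves of $\g_{G/e}$ meeting at $V_e$; in particular $h(P)=\g_{G/e}$. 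Since $h$ is $1$-Lipschitz (Proposition \ref{prop:Lips}) and $\g_{G/e}=h(P)$, for $\alpha\in\g_G$ we get $d_{G/e}(h(\alpha),\g_{G/e})\le d_G(\alpha,P)$, so \eqref{eq_dist2_xyG} follows once I prove $d_G(\alpha,P)\le\d(G)$. Dually, every $\alpha'\in\g_{G/e}$ equals $h(\alpha)$ for some $\alpha\in P$, and then $d_{G/e}(\alpha',h(\g_G))\le d_G(\alpha,\g_G)$, so \eqref{eq_dist2_xyG/e} follows once I prove $d_G(\alpha,\g_G)\le 2\d(G)$ for every $\alpha\in P$.

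To get $d_G(\alpha,P)\le\d(G)$ I consider the geodesic triangle $T=\{x,y,A\}$ in $G$ with sides $[xy]=\g_G$, $[xA]=\eta_1$ and a third geodesic $[Ay]$. The favourable situation is that $e\cup\eta_2$ is itself a geodesic from $A$ to $y$, i.e. $d_G(A,y)=L(\eta_2)+1$; then I choose $[Ay]=e\cup\eta_2$, so that the union of the two sides incident to $A$ is precisely $P=\eta_1\cup e\cup\eta_2$. A single application of $\d(G)$-thinness of $T$ to a point $\alpha\in\g_G$ produces a point of $P$ within $\d(G)$ of $\alpha$, which is exactly $d_G(\alpha,P)\le\d(G)$. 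If instead $\eta_1\cup e$ is a geodesic from $x$ to $B$, the symmetric triangle $\{x,y,B\}$ works the same way.

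For $d_G(\alpha,\g_G)\le 2\d(G)$ I use the same triangle $T=\{x,y,A\}$, but now I must pass from a point $\alpha$ lying on a side ($\eta_1$, $e$ or $[Ay]$) to the opposite side $\g_G$. Thinness places $\alpha$ within $\d(G)$ of $\g_G\cup[Ay]$; if the witness already lies on $\g_G$ I am done with $\d(G)$, and otherwise a second application of thinness, to the side carrying the witness, pushes it within a further $\d(G)$ of $\g_G$. This two-step passage is exactly what yields the factor $2$ in $2\d(G)$, the edge $e$ being absorbed since its endpoints $A,B$ lie on $\eta_1,\eta_2$.

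The main obstacle is the case in which \emph{neither} $\eta_1\cup e$ \emph{nor} $e\cup\eta_2$ is a geodesic; this case genuinely occurs under the stated hypotheses (one can exhibit small examples with $\mathcal{C}(G,e)=\emptyset$ for which $d_G(A,y)=L(\eta_2)$ and $d_G(x,B)=L(\eta_1)$ simultaneously). Here I would introduce auxiliary geodesics $\mu$ from $x$ to $B$ and $\tau$ from $A$ to $y$, of the minimal lengths $L(\eta_1),L(\eta_2)$; then $\eta_1\cup\tau$ and $\mu\cup\eta_2$ are genuine geodesics from $x$ to $y$, forming geodesic bigons with $\g_G$. Applying thinness to these bigons and tracking on which sub-arc ($\eta_1$, $\tau$, $\mu$ or $\eta_2$) the witness falls reduces every $\alpha\in\g_G$ to a point within $\d(G)$ of $\eta_1\cup\eta_2\subset P$. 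The delicate point, and the real crux, is to exclude (or otherwise control, using the minimality of $P=h_0^{-1}(\g_{G/e})$ together with the assumption that $x,y$ do not lie on a common triangle of $\mathcal{C}(G,e)$) the configuration in which the witness lands simultaneously near $\tau$ and near $\mu$, so that the sharp constant $\d(G)$ in \eqref{eq_dist2_xyG}, rather than merely $2\d(G)$, is preserved.
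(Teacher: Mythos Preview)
Your plan is sound, but the ``main obstacle'' you flag at the end is real and you do not close it; your bigon workaround with $\mu$ and $\tau$ yields at best $2\d(G)$ for \eqref{eq_dist2_xyG}, and the final paragraph concedes as much. The paper sidesteps the obstacle entirely by taking as third vertex the \emph{midpoint} $z$ of $e$ rather than $A$ or $B$. The key observation is that $d_G(x,A)=L(\eta_1)=d_{G/e}(h(x),V_e)\le d_G(x,B)$, so any geodesic from $x$ to $z$ has length $L(\eta_1)+\tfrac12$, and in particular $\eta_1\cup[Az]$ \emph{is} a geodesic $[xz]_G$; symmetrically $[zB]\cup\eta_2$ is a geodesic $[zy]_G$. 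Thus $T=\{\g_G,[yz]_G,[zx]_G\}$ is always a geodesic triangle with $[zx]_G\cup[yz]_G=P$, and a single application of $\d(G)$-thinness gives $d_G(\a,P)\le\d(G)$ for every $\a\in\g_G$, hence \eqref{eq_dist2_xyG}. No case distinction on whether $\eta_1\cup e$ or $e\cup\eta_2$ is geodesic is needed, and concrete examples (with $\mathcal{C}(G,e)=\emptyset$) show that both can fail simultaneously, so your favourable case does not cover everything.

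Your two-step thinness argument for \eqref{eq_dist2_xyG/e} is also incomplete as written: after the second application the witness may fall back onto the side you started from, so the iteration need not reach $\g_G$ in two steps. The paper's argument (in the same triangle $T$) is different in nature. For $\a\in\eta_2$ with $d_G(\a,e)>\d(G)$, it uses that $h(\a)$ and every point of $h([zx]_G)$ lie on opposite sides of $V_e$ along the \emph{geodesic} $\g_{G/e}$; hence $d_{G/e}(h(\a),h([zx]_G))>\d(G)$, which forces $d_G(\a,[zx]_G)>\d(G)$, so the $\d(G)$-thinness witness for $\a$ must lie on $\g_G$. Points of $\eta_2$ within $\d(G)$ of $e$ are then handled by a single extra $\d(G)$-hop to such an $\a$, which is where the factor $2$ comes from.
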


\begin{remark}
Since there is no $C \in \mathcal{C}(G,e)$ with $x,y \in C$, we deduce that
$h(x),h(y)$ are not contained in an edge $e_0$ with $h^{-1}(e_0) \in \mathcal{C}(G,e)$,
and then $h_0^{-1}(\g_{G/e})$ is well defined.
\end{remark}

\begin{proof}
Without loss of generality we can assume that $G$ is hyperbolic, since otherwise the inequalities trivially hold.
Let $z$ be the midpoint of $e=[A,B]$. By symmetry, we can assume that the
closure of the connected components of
$h_0^{-1}(\g_{G/e}\setminus \{ V_e \})$ join $x$ with $A$, and $B$ with $y$.
Clearly,
$$
d_{G}(z,x) = d_{G}(z,A)+\frac12 =d_{G/e}(h(x),V_e)+\frac12\,,
\qquad
d_{G}(z,y) = d_{G}(y,B)+\frac12 =d_{G/e}(h(y),V_e)+\frac12\,.
$$
So, there are geodesics $[xz]_G$ and $[zy]_G$ in $G$ verifying the following: $[xz]_G$ contains the
closure of the connected component of
$h_0^{-1}(\g_{G/e}\setminus \{ V_e \})$ joining $x$ with $A$, and $[zy]_G$ contains
the closure of the connected component of $h_0^{-1}(\g_{G/e}\setminus \{ V_e \})$ joining $B$ with $y$.
Hence, $T:=\{\g_G,[yz]_G,[zx]_G\}$ is a geodesic triangle in $G$ and so,
$$
d_G(\a,[yz]_G\cup [zx]_G) = d_G(\a,h_0^{-1}(\g_{G/e})) \leq \d(G)
$$
for every $\a \in \g_G$,
and \eqref{eq_dist2_xyG} holds by Proposition \ref{l:quasi_isom}.

In order to obtain \eqref{eq_dist2_xyG/e}, without loss of generality we can assume that
$\a'\in h([yz]_{G})$, since $\g_{G/e}=h([yz]_{G}) \cup h([zx]_{G})$.
Denote by $\a$ a point in $[yB]_G$ with $h(\a)=\a'$.
If $d_{G}(y, e) = d_G(y,B)\leq 2\d(G)$, then we have
$$
d_{G/e}(\a', h(\g_{G}))\le d_{G}(\a, \g_{G})\le d_G(\a,y)\le d_G(B,y)\leq 2\d(G).
$$
Assume that $d_{G}(y, e)> 2\d(G)$.
Now we can take a point $w\in[yz]_{G}$ such that $d_{G}(w , e)=\d(G)$.
If $\a\in [wy]\setminus\{w\}$, then the hyperbolicity of $G$ implies $d_{G}(\a, \g_G \cup [zx]_{G}) \leq \d(G)$;
note that $d_{G}(\a , [zx]_{G}) > \d(G)$ since $\g_{G/e}$ is a geodesic in $G/e$ and $d_{G}(w , e)=\d(G)$.
Hence, $d_G(\a,\g_G)\leq \d(G)$ and thus
Proposition \ref{l:quasi_isom} gives
$d_{G/e}(\a',h(\g_G)) \leq \d(G)$.
Assume now that $\a \in [wB]\setminus{B}$ (therefore, $\a'\neq V_e$).
Thus, there exists $\a_1\in[yz]_{G}$ such that $d_{G}(\a,\a_1) = \d(G)$ and $d_{G}(\a_1 , e) > \d(G)$.
Therefore, $\a_1\in [wy]\setminus\{w\}$ and we have proved that $d_G(\a_1,\g_G)\leq \d(G)$.
Hence, Proposition \ref{l:quasi_isom} gives
$$
d_{G/e}(\a' , h(\g_G))
\le d_{G}(\a , \g_G)
\le d_{G}(\a , \a_1) + d_{G}(\a_1 , \g_G)
\le 2\d(G).
$$
If $\a=B$, then $\a' = V_e$
and the inequality for $\a'=V_e$ is obtained by continuity.
\end{proof}

In what follows,
if $x,y$ belong to some $C \in \mathcal{C}(G,e)$, then we denote by $h_0^{-1}( [h(x)h(y)]_{G/e} )$ any fixed choice of a geodesic in $C$ with
$[h(x)h(y)]_{G/e} \subseteq h(h_0^{-1}( [h(x)h(y)]_{G/e} ))$, and by $h_0^{-1}( \a' )$ any point in $h^{-1}( \a' ) \cap h_0^{-1}( [h(x)h(y)]_{G/e} )$.

\begin{lemma}\label{l:dist1_G/e}
   Let $G$ be a graph and $e\in E(G)$ such that $G/e$ is not a tree.
   Let $[xy]_G$ be a geodesic in $G$ joining $x,y \in J(G)$.
   Assume that $h([xy]_G)$ is not a geodesic in $G/e$ and let $[h(x)h(y)]_{G/e}$ be a geodesic in $G/e$ joining $h(x)$ and $h(y)$.
      Then we have
   \begin{equation}\label{eq_dist1_xyG}
     d_{G}(h_0^{-1}(\a'),[xy]_G)\le \d(G/e)+1 \le \frac73 \,\d(G/e), \quad \forall\ \a'\in [h(x)h(y)]_{G/e}
   \end{equation}
   and
   \begin{equation}\label{eq_dist1_xyG/e}
     d_{G}\big(\a,h_0^{-1}([h(x)h(y)]_{G/e}) \big)\le 2 \d(G/e), \quad \forall\ \a\in [xy]_{G}.
   \end{equation}
\end{lemma}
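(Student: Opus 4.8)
The plan is to push everything into $G/e$, exploit the $\d(G/e)$-thinness of a suitable geodesic triangle there, and lift back to $G$ with Proposition \ref{l:quasi_isom}; this is the mirror image of the argument in Lemma \ref{l:dist2_G}, now driven by the hyperbolicity of the contracted graph. First I would settle the local picture. Put $L=d_G(x,y)$ and $L'=d_{G/e}(h(x),h(y))$. If $e\subseteq[xy]_G$ then $h([xy]_G)$ has length $L-1$, while \eqref{eq:+1} (licit because $y\in J(G)$) gives $L'\ge L-1$; thus $h([xy]_G)$ would be geodesic, contrary to hypothesis. Hence $e\not\subseteq[xy]_G$, so $h$ preserves length along $[xy]_G$ and $L-1\le L'<L$. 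Since $G/e$ with $V_e$ removed is isometric to $G$ with the interior of $e$ removed, a geodesic $[h(x)h(y)]_{G/e}$ cannot avoid $V_e$ (otherwise it would lift to an $x$--$y$ path of length $L'<L$); therefore $V_e\in[h(x)h(y)]_{G/e}$, and the lift $\s:=h_0^{-1}([h(x)h(y)]_{G/e})$ crosses $e$, say $\s=[xA]_G\cup e\cup[By]_G$ with both outer arcs geodesic and $L(\s)=L'+1$. The degenerate cases $x\in e$ or $y\in e$ I would dispose of separately using the case list of Proposition \ref{l:quasi_isom}.

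Next I would realize $h([xy]_G)$ as the two long sides of a geodesic triangle. Writing $f(s)=d_G(\g(s),\{A,B\})$ along $\g=[xy]_G$, a subarc $\g|_{[a,b]}$ maps to a geodesic of $G/e$ exactly when $d_G(a,b)\le f(a)+f(b)$, the only conceivable shortcut being through $V_e$. Let $m$ be the point of $[xy]_G$ closest to $e$ and set $r:=f(m)=d_G([xy]_G,e)$. In the generic configuration, where $[xy]_G$ stays at distance $\ge\frac12$ from $\{A,B\}$, monotonicity of $s\mapsto s\pm f(s)$ together with $L-L'\le1$ shows that both $h([xm]_G)$ and $h([my]_G)$ are geodesics; then $T':=\{h([xm]_G),h([my]_G),[h(x)h(y)]_{G/e}\}$ is a geodesic triangle, and $T_1:=\{h(x),h(m),V_e\}$, $T_2:=\{h(m),h(y),V_e\}$ are geodesic triangles sharing the spur $[h(m)V_e]$ of length $r$. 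Applying $\d(G/e)$-thinness of $T'$ to the point $V_e$ on its side $[h(x)h(y)]_{G/e}$ gives $r=d_{G/e}(V_e,h([xy]_G))\le\d(G/e)$, so the spur is short. The borderline configuration, in which $[xy]_G$ comes within $\frac12$ of $\{A,B\}$ (possibly passing through $A$ or $B$), is where the clean two-geodesic splitting can break down; there $[xy]_G$ runs within $\frac12$ of $e\subseteq\s$ along the relevant stretch, and I would argue the two estimates directly. I expect this case, together with the sharp additive bookkeeping below, to be the main obstacle.

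Inequality \eqref{eq_dist1_xyG} is then immediate. For $\a'\in[h(x)h(y)]_{G/e}$, thinness of $T'$ yields $\g(t)\in[xy]_G$ with $d_{G/e}(\a',h(\g(t)))\le\d(G/e)$, and \eqref{eq:+1} (with the comparison point taken in $J(G)$, so that the $+1$ form applies) lifts this to $d_G(h_0^{-1}(\a'),\g(t))\le\d(G/e)+1$, whence $d_G(h_0^{-1}(\a'),[xy]_G)\le\d(G/e)+1$. The final chain $\d(G/e)+1\le\frac73\,\d(G/e)$ holds because $G/e$ is not a tree, so $\d(G/e)\ge\frac34$.

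For \eqref{eq_dist1_xyG/e} the constant $2\d(G/e)$ comes from a dichotomy on $\a\in[xy]_G$. If $d_G(\a,e)\le2\d(G/e)$ then, since $e\subseteq\s$, already $d_G(\a,\s)\le d_G(\a,e)\le2\d(G/e)$. If $d_G(\a,e)>2\d(G/e)$, thinness of $T_1$ (or $T_2$) places $h(\a)$ within $\d(G/e)$ of $[h(x)h(y)]_{G/e}\cup[h(m)V_e]$; the spur alternative would force $d_{G/e}(h(\a),V_e)\le\d(G/e)+r\le2\d(G/e)$, i.e. $d_G(\a,e)\le2\d(G/e)$, against the case, so $h(\a)$ lies within $\d(G/e)$ of $[h(x)h(y)]_{G/e}$. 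As $d_{G/e}(h(\a),V_e)>2\d(G/e)$, the realizing point $\a'$ satisfies $d_{G/e}(\a',V_e)>\d(G/e)$, so the connecting geodesic of length $\le\d(G/e)$ avoids $V_e$ and lifts isometrically, producing $\b=h_0^{-1}(\a')\in\s$ with $d_G(\a,\b)\le\d(G/e)$. Thus $d_G(\a,\s)\le2\d(G/e)$ in every case.
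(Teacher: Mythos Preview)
Your overall plan---split $[xy]_G$ at a well-chosen point so that both halves project to geodesics in $G/e$, form a geodesic triangle there, and invoke $\d(G/e)$-thinness---is exactly the paper's strategy. The gap is in your choice of split point. Taking $m$ to be the point of $[xy]_G$ closest to $e$ does \emph{not} guarantee that $h([xm]_G)$ and $h([my]_G)$ are both geodesics in $G/e$, and the appeal to ``monotonicity of $s\mapsto s\pm f(s)$ together with $L-L'\le1$'' does not yield the needed inequality $s_0\le f(0)+r$.

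Here is a concrete obstruction (no $3$-cycles, $r\ge\frac12$, so squarely in your ``generic'' case). Take the path $P:x=u_0,u_1,\dots,u_{21}=y$; attach a length-$10$ path from $x$ to $A$ and a length-$10$ path from $y$ to $B$; let $e=[A,B]$; and add the single edge $[u_{12},B]$. Then $d_G(x,y)=21$ and $P$ is a geodesic, while $f(0)=d_G(x,e)=10$, $f(21)=10$, and the minimum of $f$ along $P$ is $r=1$, attained at $m=u_{12}$, so $s_0=12$. But $d_{G/e}(h(x),h(u_{12}))\le d_{G/e}(h(x),V_e)+1=11<12$, so $h([xm]_G)$ is \emph{not} a geodesic. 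Thus $T'$ as you define it is not a geodesic triangle and the argument stalls. The paper avoids this by choosing instead the point $z$ at parameter $f(0)+\tfrac12$ along $[xy]_G$ (the midpoint of the edge $[A',B']$ with $d_G(x,A')=d_G(x,e)$ and $d_G(y,B')=d_G(y,e)$); one then checks, using $d_G(x,y)=d_{G/e}(h(x),h(y))+1$ and the case hypothesis $L([xy]_G\cap C)\le1$ for every $C\in\mathcal C(G,e)$, that both $h([xz]_G)$ and $h([zy]_G)$ are geodesics. In the example above $z$ sits at parameter $10.5$ and indeed both halves project to geodesics.

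Two secondary points. First, the implication ``$e\not\subseteq[xy]_G$, so $h$ preserves length along $[xy]_G$'' is fine as a statement about the length of the image path, but your claim that a path in $G/e$ avoiding $V_e$ lifts to a path in $G$ with the \emph{same} endpoints needs the absence of $3$-cycles through $e$ (otherwise $h$ is not injective off $e$); this is precisely why the paper separates out cases (a) and (b.1), which you only defer. Second, your derivation of \eqref{eq_dist1_xyG/e} via the spur $[h(m)V_e]$ and the dichotomy on $d_G(\a,e)$ is a nice variant of the paper's argument and would go through once the split point is corrected---but it too relies on $r\le\d(G/e)$, which you obtain from thinness of $T'$, so it collapses with the split-point error.
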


\begin{proof}
Without loss of generality we can assume that $G/e$ is hyperbolic, since otherwise the inequalities hold.
Since $G/e$ is not a tree, $\d(G/e)\ge 3/4$ by \cite[Theorem 11]{MRSV}, and
$$
\d(G/e)+1 \le \frac73 \,\d(G/e).
$$

We deal with several cases.

\smallskip

$(a)$ There exists $C \in \mathcal{C}(G,e)$ with $x,y \in C$.
For every $\a,\b\in C$ we have
$$
d_G(\a,\b)
\le \diam C
= \frac32
= \min \Big\{ \frac34+1, 2 \frac34 \, \Big\}
\le \min \big\{ \d(G/e)+1, 2 \d(G/e) \big\}.
$$
Since $[xy]_G$ and $h_0^{-1}([h(x)h(y)]_{G/e}$ are contained in $C$,
\eqref{eq_dist1_xyG} and \eqref{eq_dist1_xyG/e} hold.

\smallskip

$(b)$ There is no $C \in \mathcal{C}(G,e)$ with $x,y \in C$.
Thus $h(x),h(y)$ are not contained in an edge $e_0$ with $h^{-1}(e_0) \in \mathcal{C}(G,e)$,
and then $h_0^{-1}([h(x)h(y)]_{G/e})$ is defined as before Lemma \ref{l:dist2_G}.

Let $[A,B]:=e$.
Since $h([xy]_G)$ is not a geodesic in $G/e$ and $x,y \in J(G)$, we have
$e \cap [xy]_G \subsetneq \{A,B\}$, $L([xy]_G)\ge 3/2$, $d_G(x,y)=d_{G/e}(h(x),h(y))+1$
and $V_e \in [h(x)h(y)]_{G/e}$.

\smallskip

$(b.1)$
Assume that there exists a cycle $C\in \mathcal{C}(G,e)$ with $L([xy]_G\cap C)> 1$.
Since $[xy]_G$ is a geodesic in $G$, $h([xy]_G)$ is not a geodesic in $G/e$ and $x,y \in J(G)$, we have $L([xy]_G\cap C) = 3/2$,
$C \subset h_0^{-1}([h(x)h(y)]_{G/e}) \cup [xy]_G$,
the closures of $h_0^{-1}([h(x)h(y)]_{G/e}) \setminus C$ and $[xy]_G \setminus C$ are two geodesics in $G$ with the same endpoints,
and the closures of $[h(x)h(y)]_{G/e} \setminus h(C)$ and $h([xy]_G) \setminus h(C)$ are two geodesics in $G/e$ with the same endpoints.
Since $L([xy]_G\cap C) = 3/2= L(h_0^{-1}([h(x)h(y)]_{G/e}) \cap C)$ and $L(C)=3$, we have
$$
\begin{aligned}
d_{G}(h_0^{-1}(\a'),[xy]_G) \le 3/4 \le \d(G/e), \quad & \forall\ \a'\in [h(x)h(y)]_{G/e} \cap h(C),
\\
d_{G}\big(\a,h_0^{-1}([h(x)h(y)]_{G/e}) \big) \le 3/4 \le \d(G/e), \quad & \forall\ \a\in [xy]_{G} \cap C.
\end{aligned}
$$
Since the closures of $h_0^{-1}([h(x)h(y)]_{G/e}) \setminus C$ and $[xy]_G \setminus C$ are two geodesics in $G$ with the same endpoints,
and the closures of $[h(x)h(y)]_{G/e} \setminus h(C)$ and $h([xy]_G) \setminus h(C)$ are two geodesics in $G/e$ with the same endpoints,
we also have
$$
d_{G}(h_0^{-1}(\a'),[xy]_G)
\le d_{G}\big(h_0^{-1}(\a'),[xy]_G \setminus C\big)
= d_{G/e}\big(\a',h([xy]_G) \setminus h(C)\big)
\le \d(G/e),
$$
for every $\a'\in [h(x)h(y)]_{G/e} \setminus h(C)$, and
$$
d_{G}\big(\a,h_0^{-1}([h(x)h(y)]_{G/e}) \big)
\le d_{G}\big(\a,h_0^{-1}([h(x)h(y)]_{G/e}) \setminus C \big)
= d_{G/e}\big(h(\a), [h(x)h(y)]_{G/e} \setminus h(C) \big)
\le \d(G/e) ,
$$
for every $\a\in [xy]_{G} \setminus C$.

\smallskip

$(b.2)$
Assume now that $L([xy]_G\cap C)\le 1$ for every $C\in \mathcal{C}(G,e)$.

Note that $L\big( h([xy]_G) \big) = L([xy]_G)$,
since $e \cap [xy]_G \subset \{A,B\}$,
so, for any $z\in[xy]_G$ we have $L\big( h([xz]_G)\big)=L([xz]_G)$ and $L\big( h([zy]_G)\big)=L([zy]_G)$.
Consider the points $A',B'\in [xy]_G$ such that $d_{G}(x,A')=d_G(x,e)$ and $d_{G}(y,B')=d_G(y,e)$. Since $x,y\in J(G)$, we have  $A',B'\in V(G)$.
Since $L\big( h([xy]_G) \big) = L([xy]_G)$, $[A',B']\in E(G)$ and $[A',B']\subset[xy]_G$.
Let $z$ be the midpoint of $[A',B']$.
Since $d(x,A')=d(x,e)$, $d_G(y,B')=d_G(y,e)$, $d_G(z,A')=d_G(z,B')=1/2$ and $L([xy]_G\cap C)\le 1$ for every $C\in \mathcal{C}(G,e)$,
we have that $h([xz]_G)$ and $h([zy]_G)$ are geodesics in $G/e$.
Hence, $T=\{[h(x)h(y)]_{G/e},h([yz]_{G}),h([zx]_{G})\}$ is a geodesic triangle in $G/e$, and thus
$$
\begin{aligned}
d_{G}(h_0^{-1}(\a'),[xy]_G) &
\le
d_{G/e}(\a',h([xy]_G)) + 1
= d_{G/e}(\a',h([xz]_G)\cup h([zy]_G)) + 1 \\
&
\le \d(T) + 1 \le \d(G/e) + 1 \le \frac73 \, \d(G/e),
\end{aligned}
$$
for every $\a'\in [h(x)h(y)]_{G/e}$.

\smallskip

In order to obtain \eqref{eq_dist1_xyG/e}, without loss of generality we can assume that $\a\in [yz]_{G}$.
If $L([yz]_{G})\leq 2\d(G/e)$, then we have $d_G\big(\a, h_0^{-1}([h(x)h(y)]_{G/e})\big)\le d_G(\a,y)\le 2\d(G/e)$. Assume that $L([yz]_{G}) > 2\d(G/e)$.
Let $w$ be the point in $[yz]_{G}$ with $d_{G}(w , z)=\d(G/e)$.
If $\a\in [wy]_G\setminus\{w\}$, then the hyperbolicity of $G/e$ implies $d_{G/e}\big( h(\a), [h(x)h(y)]_{G/e} \cup h([zx]_{G}) \big) \leq \d(G/e)$.
Note that if $d_{G/e}\big( h(\a) , h([zx]_{G}) \big) \leq \d(G/e)$, then a geodesic $\g$ joining $h(\a)$ and $h([zx]_{G})$ in $G/e$ contains $V_e$ and,
since $V_e\in [h(x)h(y)]_{G/e}$, we obtain
$$
d_{G/e}(h(\a) , [h(x)h(y)]_{G/e}) \leq d_{G/e}(h(\a) , V_e) \leq L(\g) \leq \d(G/e).
$$
Thus, we have $d_{G/e}(h(\a) , [h(x)h(y)]_{G/e}) \leq \d(G/e)$.
Hence, we obtain
$$
d_G\big( \a, h_0^{-1}([h(x)h(y)]_{G/e}) \big)=d_{G/e}(h(\a) , [h(x)h(y)]_{G/e}) \leq \d(G/e).
$$
Assume now that $\a \in [zw]_G\setminus\{z\}$.
Then, there exists $\a_1\in[wy]_{G}\setminus\{w\}$ such that $d_{G}(\a,\a_1) = \d(G/e)$, and we deduce
$$
d_G\big( \a, h_0^{-1}([h(x)h(y)]_{G/e}) \big)
\le d_{G}(\a , \a_1) + d_G\big( \a_1, h_0^{-1}([h(x)h(y)]_{G/e}) \big)
\leq 2\d(G/e).
$$
The inequality for $\a=z$ is obtained by continuity.
\end{proof}

\begin{remark}\label{r:No3e_G}
   Let $G$ be a graph, $e\in E(G)$ and $T=\{\g_1,\g_2,\g_3\}$ a geodesic triangle in $G$. Then at least one of the curves $h(\g_1),h(\g_2),h(\g_3)$ is a geodesic in $G/e$, since otherwise there exists another geodesic triangle $T'=\{\g'_1,\g'_2,\g'_3\}$ with the same vertices that $T$ and such that the edge $e$ is contained in $\g_1' \cap \g_2' \cap \g_3'$.
\end{remark}

The following result will be useful.

\begin{theorem}\cite[Theorem 2.7]{BRS}
\label{t:TrianVMp}
For a hyperbolic graph $G$, there exists
a geodesic triangle $T = \{x, y, z\}$ that is a cycle with $x, y, z \in J(G)$ and $\d(T) = \d(G)$.
\end{theorem}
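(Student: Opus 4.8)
\emph{Proof idea.} The plan is to turn the supremum $\d(G)=\sup_T\d(T)$ into a maximum attained on a tame subclass of triangles, and then to exploit the discreteness of the admissible values of $\d$. Concretely, I would establish the two reductions
\[
\d(G)=\sup\{\d(T):T\text{ is a cycle}\}=\sup\{\d(T):T\text{ is a cycle with }x,y,z\in J(G)\},
\]
and combine them with the known fact that the thin constant of any geodesic triangle with vertices in $J(G)$ lies in $\tfrac14\ZZ$ (see \cite{MRSV}). Granting these, the conclusion is immediate: the set of values $\{\d(T):T\text{ a cycle with vertices in }J(G)\}$ is contained in the finite set $\tfrac14\ZZ\cap[0,\d(G)]$, so its supremum $\d(G)$ is in fact a maximum, realized by some cycle $T=\{x,y,z\}$ with $x,y,z\in J(G)$, which is exactly the assertion.

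First I would prove the reduction to cycles. Starting from an arbitrary geodesic triangle $T=\{x,y,z\}$, fix a \emph{deepest point} $p$, say on $[xy]$, with $d(p,[yz]\cup[zx])=\d(T)$. If the union of the three sides is not a simple closed curve, then two sides either share a nontrivial subarc issuing from a common vertex or cross each other; in either situation one excises the redundant portion and relocates the vertices to the points where the sides first separate, obtaining a genuinely ``thinner'' geodesic triangle whose boundary is a cycle. The deepest point $p$ (or a corresponding one) survives this surgery, with its distance to the two opposite sides unchanged or increased, so $\d$ does not decrease. I expect this bookkeeping to be \textbf{the main obstacle}: one must verify that the deepest point persists and that no relevant distance drops when passing to the cycle, which amounts to controlling how the geodesics branch near the three vertices (essentially a Gromov-product analysis at each corner).

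Finally I would prove the reduction to $J(G)$ by a perturbation argument. Fixing the \emph{combinatorial type} of a cycle triangle — the cyclic list of edges of $G$ that its boundary traverses — I regard $x,y,z$ as varying in the closures of their respective edges. With the type fixed, the function $p\mapsto d(p,[yz]\cup[zx])$ along each side is a minimum of $1$-Lipschitz functions whose breakpoints sit over the vertices of $G$, so $\d(T)$ becomes a continuous, piecewise-linear function of the three edge-coordinates; moreover, because distances between points of $J(G)$ live on the lattice $\tfrac12\ZZ$, the breakpoints in these coordinates occur only when a vertex reaches an endpoint or the midpoint of its edge. Hence the maximum of $\d(T)$ over the compact coordinate box is attained with each of $x,y,z$ a vertex or a midpoint of $G$, that is, with $x,y,z\in J(G)$, while the cycle structure is preserved. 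Combining this with the cycle reduction and the discreteness of the values of $\d$ then delivers, through the finiteness argument above, a cycle geodesic triangle $T=\{x,y,z\}$ with $x,y,z\in J(G)$ and $\d(T)=\d(G)$.
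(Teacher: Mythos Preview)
The paper does not prove this theorem: it is quoted from \cite[Theorem~2.7]{BRS} and used as a black box (notably in Lemma~\ref{l:x} and Theorem~\ref{t:quanti}), so there is no in-paper argument to compare your proposal against.

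For what it is worth, your outline tracks the strategy behind the cited result. The reduction to cycles is precisely Lemma~\ref{l:simple} of this paper (itself quoted from \cite{RT1}), and the discreteness $\d(T)\in\tfrac14\ZZ$ for $x,y,z\in J(G)$ is indeed the mechanism that turns the supremum into an attained maximum. The one place your sketch is genuinely thin is the $J(G)$ perturbation step: fixing the combinatorial type of the cycle and sliding $x,y,z$ within their edges does \emph{not} automatically keep all three sides geodesic over the full coordinate box (a subarc of a geodesic is geodesic, but an extension past the original endpoint need not be), so the piecewise-linear function you describe is only defined on a subregion. You must either restrict to that subregion and check that its boundary still forces the vertices into $J(G)$, or argue directly that moving each vertex to the nearest point of $J(G)$ does not decrease $\d(T)$. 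This is repairable and is where the real work in \cite{BRS} sits, but as written your proposal glosses over it.
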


In order to prove Theorem \ref{t:quanti} we will need the following technical result.

\begin{lemma} \label{l:x}
Let $G$ be a graph and $e\in E(G)$. If $G/e$ is a tree, then
$$
\d(G) \le 1.
$$
\end{lemma}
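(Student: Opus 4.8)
The plan is to combine two ingredients: a structural bound on the lengths of the cycles of $G$, and the existence of an extremal geodesic triangle that is itself a cycle (Theorem \ref{t:TrianVMp}). First I would dispose of the trivial case: if $G$ is a tree then $\d(G)=0$ and there is nothing to prove. Otherwise, since $G/e$ is a tree and hence hyperbolic, Theorem \ref{t:quali} guarantees that $G$ is hyperbolic, so $\d(G)<\infty$ and Theorem \ref{t:TrianVMp} is available.

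The heart of the argument is the claim that every cycle of $G$ has length at most $4$. I would prove this by analysing the map $h\colon G\to G/e$, writing $e=[A,B]$. First, any simple cycle $\s$ in $G$ must contain both $A$ and $B$: if $\s$ avoided $B$ (resp. $A$), then $\s$ would use neither $e$ nor any edge incident to $B$ (resp. $A$), so $h\!\mid_\s$ would be injective and would map $\s$ to a genuine simple cycle of $G/e$, contradicting that $G/e$ is a tree; the same argument rules out $\s\cap\{A,B\}=\emptyset$. Thus $\s$ decomposes into two internally disjoint $A$--$B$ paths $P_1,P_2$. Any $P_i$ of length $\ge 2$ has the form $A,x_1,\dots,x_k,B$ with $x_j\notin\{A,B\}$, and its image $h(P_i)=V_e,h(x_1),\dots,h(x_k),V_e$ is a closed walk in the tree $G/e$ whose interior vertices $h(x_1),\dots,h(x_k)$ are distinct and different from $V_e$; since a tree has no nontrivial simple cycle, this forces $k=1$, i.e. $P_i$ is a path $A,v,B$ through a common neighbour $v$ of $A$ and $B$. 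Consequently each $A$--$B$ arc of $\s$ is either the edge $e$ or a length-$2$ path through a common neighbour, so $\s$ is either a triangle ($L(\s)=3$) or a quadrilateral ($L(\s)=4$).

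With this in hand I would finish as follows. By Theorem \ref{t:TrianVMp} there is a geodesic triangle $T=\{x,y,z\}$ that is a cycle with $\d(T)=\d(G)$; being a simple closed curve in the metric graph $G$, its support is a cycle of $G$, so its perimeter $L(T)$ (the sum of its side lengths) satisfies $L(T)\le 4$ by the previous paragraph. Finally, for any geodesic triangle one has the elementary bound $\d(T)\le L(T)/4$: each side, say $[xy]$, is a geodesic, so its length $a=d_G(x,y)\le d_G(x,z)+d_G(z,y)$ gives $a\le L(T)/2$, whence every point $p\in[xy]$ satisfies (as $x\in[zx]$ and $y\in[yz]$) $d_G\big(p,[yz]\cup[zx]\big)\le\min\{d_G(p,x),d_G(p,y)\}\le a/2\le L(T)/4$. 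Therefore $\d(G)=\d(T)\le L(T)/4\le 1$.

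The main obstacle is the structural claim that all cycles of $G$ have length at most $4$; in particular the step forcing each $A$--$B$ arc to have length at most $2$ must be argued carefully, since this is precisely where the hypothesis that $G/e$ is a tree enters (a closed walk through distinct interior vertices of a tree cannot avoid being a forbidden simple cycle unless it is a single backtrack). The remaining pieces, namely reducing to an extremal cycle-triangle via Theorem \ref{t:TrianVMp} and the perimeter estimate $\d(T)\le L(T)/4$, are routine once the cycle lengths are controlled.
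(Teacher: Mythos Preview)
Your proof is correct and follows essentially the same route as the paper: both arguments reduce to showing that every cycle of $G$ has length at most $4$, and then conclude via Theorem~\ref{t:TrianVMp} together with the elementary perimeter estimate $\d(T)\le L(T)/4$. Your justification of the cycle-length bound---decomposing a cycle into two $A$--$B$ arcs and showing each has length at most $2$ because its $h$-image would otherwise be a nontrivial simple cycle in the tree $G/e$---is a slightly more explicit version of the paper's terser claim that every edge lying on a cycle must be incident to $A$ or $B$ (and in fact lie on a triangle through $e$), but the content is the same.
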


\begin{proof}
If $G$ is a tree, then $\d(G) =0\le 1$.

Assume now that $G$ is not a tree.
Since $G/e$ is a tree, if an edge $e_0$ is contained in a cycle in $G$, then it is contained in some cycle $C_0 \in \mathcal{C}(G,e)$
and it contains $A$ or $B$.
Since any cycle in $\mathcal{C}(G,e)$ contains the edge $e$, if $[A,B]=e$, then every cycle in $G$ contains the vertices $A$ and $B$.
Therefore, any cycle in $G$ has length at most $4$.

Theorem \ref{t:quali} gives that $G$ is hyperbolic since $\d(G/e) =0$.
Hence, by Theorem \ref{t:TrianVMp} there exist a geodesic triangle $T=\{x,y,z\}$ in $G$ that is a cycle with $x,y,z\in J(G)$ and $p\in [xy]$ with $d_{G}(p,[yz]\cup[zx]) = \d(T)=\d(G)$.
Since $T$ is a cycle, we have seen that $L(T)\le 4$.
Thus
$$
\d(G) = d_{G}(p,[yz]\cup[zx])
\le d_{G}(p,\{x,y\})
\le \frac12\, d_{G}(x,y)
\le \frac14\, L(T)
\le 1.
$$
\end{proof}

The previous results allow to obtain a quantitative version of Theorem \ref{t:quali}.

\begin{theorem}
\label{t:quanti}
Let $G$ be a graph and $e\in E(G)$. Then
\begin{equation}\label{eq:quanti}
  \frac13\,\d(G/e) \le \d(G) \le \frac{16}3 \,\d(G/e) + 1.
\end{equation}
\end{theorem}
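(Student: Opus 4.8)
The theorem asserts the two-sided bound $\tfrac13\d(G/e) \le \d(G) \le \tfrac{16}3\d(G/e)+1$. I would split the argument cleanly: the left inequality is a near-immediate consequence of the quasi-isometry estimate, while the right inequality is where the real work lies and where the preceding technical lemmas (Lemma \ref{l:dist2_G} and Lemma \ref{l:dist1_G/e}) are designed to be used.

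For the \textbf{lower bound} $\tfrac13\d(G/e)\le \d(G)$, the plan is to invoke invariance of hyperbolicity through the map $h$. Proposition \ref{l:quasi_isom} shows $h$ is a $(1,\tfrac32)$-quasi-isometry, but a cleaner route is to transport a thin-triangle witness. I would take a geodesic triangle in $G/e$ nearly realizing $\d(G/e)$; by Theorem \ref{t:TrianVMp} I may assume its vertices lie in $J(G/e)$ and it is a cycle. Lifting its sides via $h_0^{-1}$ (using Lemma \ref{l:dist1_G/e}) produces a configuration of paths in $G$ whose geodesic distances are controlled up to the additive constant $1$ coming from \eqref{eq:+1}. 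The factor $\tfrac13$ absorbs this additive slack: since $G/e$ is not a tree forces $\d(G/e)\ge 3/4$ (by \cite[Theorem 11]{MRSV}, as used in Lemma \ref{l:dist1_G/e}), an estimate of the form $\d(G/e)\le 3\d(G)$ follows, and when $G/e$ is a tree the bound is trivial.

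For the \textbf{upper bound} $\d(G)\le \tfrac{16}3\d(G/e)+1$, I would first dispose of the degenerate case via Lemma \ref{l:x}: if $G/e$ is a tree then $\d(G)\le 1$, which satisfies the inequality. Otherwise $G/e$ is not a tree, so $\d(G/e)\ge 3/4$. Now invoke Theorem \ref{t:TrianVMp} to choose a geodesic triangle $T=\{x,y,z\}$ in $G$ that is a cycle with $x,y,z\in J(G)$ and $\d(T)=\d(G)$, and fix $p\in[xy]_G$ with $d_G(p,[yz]_G\cup[zx]_G)=\d(G)$. The strategy is to push $T$ through $h$ into $G/e$, estimate $d_{G/e}(h(p),\,h([yz]_G)\cup h([zx]_G))$ using hyperbolicity of $G/e$, and then pull the estimate back to $G$ using \eqref{eq:+3/2}. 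The subtlety is that $h$ of a geodesic need not be a geodesic; here Remark \ref{r:No3e_G} guarantees at least one side-image is a geodesic, and for the non-geodesic sides I would apply Lemma \ref{l:dist1_G/e} to compare $h([xy]_G)$ against a genuine geodesic $[h(x)h(y)]_{G/e}$, paying the factor controlled by $2\d(G/e)$ and the additive constant there.

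\textbf{The main obstacle} is the bookkeeping of the multiplicative constant $\tfrac{16}3$ in the upper bound. Each time I replace a $h$-image of a geodesic by an actual geodesic in $G/e$, or lift back from $G/e$ to $G$, I incur a loss governed by $2\d(G/e)$ (from \eqref{eq_dist1_xyG/e} and \eqref{eq_dist2_xyG/e}) together with additive terms $1$ or $\tfrac32$ from Proposition \ref{l:quasi_isom}. Because up to three sides may need correction, and two-step triangle inequalities chain the $2\d(G/e)$ factors, the accumulated coefficient is what produces $\tfrac{16}3$; the delicate part is organizing the cases (which sides map to geodesics, whether $V_e$ lies on the relevant geodesic) so that no case exceeds this constant. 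I expect the argument to reduce, via Remark \ref{r:No3e_G}, to analyzing $d_{G/e}(h(p),\cdot)$ along the one side whose image is already a geodesic, then bounding the displacement from $p$ to its image and from the opposing sides to their images by repeated use of the two lemmas, with the additive $+1$ surviving at the end from the worst additive contribution.
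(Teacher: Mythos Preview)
Your plan for the upper bound is essentially the paper's: take a geodesic triangle $T=\{x,y,z\}$ in $G$ realizing $\d(G)$ with vertices in $J(G)$ (Theorem \ref{t:TrianVMp}), dispose of the case $\d(G)\le 1$ via Lemma \ref{l:x}, use Remark \ref{r:No3e_G} to guarantee at least one $h$-image of a side is geodesic in $G/e$, and then run a case analysis on how many sides have geodesic image, invoking Lemma \ref{l:dist1_G/e} to replace non-geodesic images by geodesics. The worst case (the side through $p$ has non-geodesic image and only one of the other two images is geodesic) is what produces the $\tfrac{16}{3}$.

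For the lower bound, however, you have the mechanism backwards. You cite Lemma \ref{l:dist1_G/e}, but that lemma bounds distances in terms of $\d(G/e)$, which is the wrong direction here; the relevant tool is Lemma \ref{l:dist2_G}, whose bounds are in terms of $\d(G)$. More importantly, the factor $3$ does \emph{not} arise from absorbing an additive $+1$ via $\d(G/e)\ge 3/4$; the paper's lower-bound argument involves no additive constants at all. One lifts the triangle $T'=\{x',y',z'\}$ realizing $\d(G/e)$ to a cycle $T$ in $G$. If $T$ is already a geodesic triangle, $\d(G/e)\le \d(G)$ directly. Otherwise exactly one lifted side, say $g_{xy}=h_0^{-1}([x'y'])$, fails to be geodesic (because $e\subset g_{xy}$); one replaces it by a genuine geodesic $[xy]_G$ and applies Lemma \ref{l:dist2_G} to find $p\in[xy]_G$ with $d_{G/e}(p',h(p))\le 2\d(G)$, then uses $\d(G)$-thinness of the geodesic triangle $\{[xy]_G,g_{yz},g_{zx}\}$ to find $p_1\in g_{yz}\cup g_{zx}$ with $d_G(p,p_1)\le \d(G)$. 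Pushing down via $h$ gives $\d(G/e)\le 2\d(G)+\d(G)=3\d(G)$. The $3$ is purely multiplicative, coming from the $2\d(G)$ in \eqref{eq_dist2_xyG/e} plus one application of thinness in $G$; your proposed route through additive slack would not reproduce this constant and is not the argument the lemmas are set up for.
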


\begin{proof}
By Theorem \ref{t:quali} we have that $G$ and $G/e$ are hyperbolic or not simultaneously.
If $G$ and $G/e$ are not hyperbolic, then $\d(G)=\d(G/e)=\infty$ and \eqref{eq:quanti} holds.
Assume now that both graphs are hyperbolic.

\smallskip

Let us prove the first inequality in \eqref{eq:quanti}.
If $G$ is a tree, then $\d(G/e)=0$ and the first inequality holds.
Assume that $G/e$ is not a tree, thus $\d(G/e)>0$.
By Theorem \ref{t:TrianVMp} there exist a geodesic triangle $T'=\{[x'y'],[y'z'],[z'x']\}$ in $G/e$ that is a cycle with
$x',y',z'\in J(G/e)$ and $p'\in [x'y']$ with $d_{G/e}(p',[y'z']\cup[z'x']) = \d(T')=\d(G/e)$.

Consider $T \subset h^{-1}([x'y'] \cup [y'z'] \cup [z'x'])$ such that $T$ is a cycle with $h(T)=T'$.
Define  $x:=h^{-1}(x')\cap T$, $y:=h^{-1}(y')\cap T$ and $z:=h^{-1}(z')\cap T$, if $V_e\notin\{x',y',z'\}$;
otherwise, if $V_e=a'$ with $a'\in\{x',y',z'\}$, then we define $a$ as the midpoint of $e$.
Hence, we can define $g_{ab}$ as the simple curve contained in $T$ joining $a$ and $b$,
and such that $h(g_{ab})=[a'b']$, for $a,b \in \{x,y,z\}$
(note that $g_{ab}=h_0^{-1}([a'b'])$ if $h_0^{-1}([a'b'])$ is defined as before Lemma \ref{l:dist2_G}, \emph{i.e.}, if $a',b'$
are not contained in an edge $e_0$ with $h_0^{-1}(e_0) \in \mathcal{C}(G,e)$).
Then $x,y,z\in J(G)$ and $T$ can be seen as the triangle $\{ g_{xy},g_{yz},g_{zx} \}$.
Note that if $V_e \in \{ x',y',z' \}$, then $T$ is a geodesic triangle in $G$.

We deal with several cases.

\smallskip

$(a)$
If $T$ is a geodesic triangle in $G$, then by Proposition \ref{l:quasi_isom} we have for any $p \in h^{-1}(p') \cap g_{xy}$
$$
\d(G/e)= \d(T')=d_{G/e}(p',[y'z']\cup[z'x'])\le d_{G}( p,g_{yz}\cup g_{zx} ) \le \d(T) \le \d(G)
$$
and so, the first inequality in \eqref{eq:quanti} holds.

\smallskip

$(b)$
Assume that $T$ is not a geodesic triangle in $G$. Thus, we have $V_e \in T'\setminus \{x',y',z'\}$, $e\subset T$ and $L(T)=L(T')+1$.
Since $V_e \notin \{x',y',z'\}$, the edge $e$ is contained in exactly one of
$g_{xy}, g_{yz}, g_{zx},$ and the other two paths  are geodesics in $G$ by Proposition \ref{l:quasi_isom}.

\smallskip

$(b.1)$
Assume that $e\subset g_{xy}$.
Note that $e$ is contained in the interior of $g_{xy}$ (recall that $V_e \notin \{x',y',z'\}$);
since $x,y \in J(G),$ we have $L(g_{xy}) \ge 2$.
Therefore, $x,y\in G\setminus\{e\}$ and there is no $C \in \mathcal{C}(G,e)$ with $x,y \in C$.
Hence,
$h_0^{-1}([x'y'])$ is defined as before Lemma \ref{l:dist2_G},
$g_{xy} = h_0^{-1}( [x'y'] )$ and $e\subset h_0^{-1}( [x'y'] )$.
Consider  a geodesic $[xy]$ in $G$ joining $x$ and $y$.
We have $L([xy])=L([x'y'])=L(h([xy]))$.
Note that $g_{xy}$ is not a geodesic by hypothesis.
By Lemma \ref{l:dist2_G} there is $p\in[xy]$ such that $d_{G/e}(p',h(p))\le 2\d(G)$.
Thus, since $\{[xy],g_{yz},g_{zx}\}$ is a geodesic triangle in $G$,
there is $p_1\in g_{yz} \cup g_{zx}$ such that $d_G(p,p_1)\le \d(G)$.
Hence, Proposition \ref{l:quasi_isom} gives
$$
\begin{aligned}
\d(G/e) &
= d_{G/e}(p',[y'z']\cup[z'x']) \leq \d_{G/e}(p',h(p_1))
\le d_{G/e}(p',h(p)) + d_{G/e}(h(p),h(p_1))
\\
& \le 2\d(G) + d_G(p,p_1) \le 3\d(G).
\end{aligned}
$$

$(b.2)$
Assume now that $e\subset g_{yz} \cup g_{zx}$.
By symmetry, we can assume that $e\subset g_{yz}$.
Note that $g_{yz}$ is not a geodesic by hypothesis, and that $g_{xy}, g_{zx}$ are geodesics.
Consider a geodesic $[yz]$  in $G$ joining $y$ and $z$.
Since $\{g_{xy},[yz], g_{zx}\}$ is a geodesic triangle in $G$,
there is $p\in [yz]\cup g_{zx}$ such that $d_G(h_0^{-1}(p'),p)\le \d(G)$.

\smallskip

$(b.2.1)$
If $p\in g_{zx}$, then Proposition \ref{l:quasi_isom} gives
$$
\d(G/e) = d_{G/e}(p',[y'z']\cup[z'x']) \le d_{G/e}(p', h(p)) \le d_G(h_0^{-1}(p'),p)\le \d(G).
$$

$(b.2.2)$
Assume that $p\in [yz]$.
The argument in $(b.1)$ also gives that
$y,z\in G\setminus\{e\}$, there is no $C \in \mathcal{C}(G,e)$ with $y,z \in C$,
$e\subset g_{yz}=h_0^{-1}( [y'z'] )$, and $L([yz])=L([y'z'])=L(h([yz]))$.
Therefore, by Lemma \ref{l:dist2_G} there is $p'_1\in [y'z']$ such that $d_{G/e}(h(p),p'_1)\le \d(G)$.
Thus, we have by Proposition \ref{l:quasi_isom}
$$
\begin{aligned}
\d(G/e)
& =d_{G/e}(p',[y'z']\cup[z'x'])\le d_{G/e}(p',p_1') \le d_{G/e}(p', h(p)) + d_{G/e}(h(p),p'_1)
\\
& \le d_G(h_0^{-1}(p'),p) + \d(G) \le 2\d(G).
\end{aligned}
$$
Hence, the first inequality in \eqref{eq:quanti} holds.

\medskip

Let us prove the second inequality in \eqref{eq:quanti}.
By Theorem \ref{t:TrianVMp} there exist a geodesic triangle $T=\{ x,y,z \}$ in $G$ that is a cycle
with $x,y,z\in J(G)$ and $p\in[xy]$ with $d_{G}(p,[yz]\cup[zx])=\d(T)=\d(G)$.
Since $x,y,z\in J(G)$ we have
$$
d_{G}(p,[yz]\cup[zx])= d_{G}(p,J(G)\cap ([yz]\cup[zx])),
$$
and if $d_{G}(p,[yz]\cup[zx])= d_{G}(p,q)$ with $q\in J(G)\cap ([yz]\cup[zx])$, then Proposition \ref{l:quasi_isom} gives
$$
d_{G}(p,[yz]\cup[zx])= d_{G}(p,q) \le d_{G/e}\big( h(p) , h(q) \big) + 1.
$$

If $\d(G)\le 1$, then the second inequality in \eqref{eq:quanti} holds.
Hence, we can assume that $\d(G)>1$.
Note that since $\d(G)>1$ we have that $G/e$ is not a tree by Lemma \ref{l:x}.

Let $n$ be the number of geodesics in $G/e$ of the set $\big\{h([xy]),h([yz]),h([zx])\big\}$.
By Remark \ref{r:No3e_G} we have $n\in\{1, 2, 3\}$.

We consider several cases.

\smallskip

$(A)$
Assume that $h([xy])$ is a geodesic in $G/e$.

\smallskip

$(A.1)$
If $n=3$, then Proposition \ref{l:quasi_isom} gives
$$
\d(G)=d_{G}(p,[yz]\cup[zx])\le d_{G/e}\big( h(p) , h([yz])\cup h([zx]) \big) + 1  \le \d(G/e)+1.
$$

$(A.2)$
Consider the case $n=2$. By symmetry we can assume that $h([yz])$ is a geodesic in $G/e$
and let $[h(z)h(x)]$ be a geodesic in $G/e$ joining $h(z)$ and $h(x)$.
Then there is $p'\in h([yz])\cup[h(z)h(x)]$ with $d_{G/e}(h(p), p')\le \d(G/e)$.
By Proposition \ref{l:quasi_isom}, we have $d_{G}(p,h_0^{-1}(p'))\le d_{G/e}\big( h(p) , p' \big) + 1\le \d(G/e) + 1$.
If $p'\in h([yz])$, then $h_0^{-1}(p')\in [yz]$ and
$$
\d(G)=d_{G}(p,[yz]\cup[zx])\le d_{G}(p,h_0^{-1}(p')) \le \d(G/e) + 1.
$$
Assume that $p'\in[h(z)h(x)]$.
By Lemma \ref{l:dist1_G/e}, there is $p_1\in[zx]$ with $d_G(h_0^{-1}(p'),p_1)\le (7/3)\, \d(G/e)$.
Then we have
$$
\d(G)=d_{G}(p,[yz]\cup[zx])\le d_G(p,p_1) \le d_{G}(p,h_0^{-1}(p')) + d_G(h_0^{-1}(p'),p_1) \le \frac{10}3\,\d(G/e) + 1.
$$

$(A.3)$
If $n=1$, then let $[h(y)h(z)],[h(z)h(x)]$ be geodesics in $G/e$ joining $h(y),h(z)$ and $h(z),h(x),$ respectively.
Then there is $p'\in [h(y)h(z)]\cup[h(z)h(x)]$ with $d_{G/e}(h(p), p')\le \d(G/e)$.
By Proposition \ref{l:quasi_isom} we have $d_{G}(p,h_0^{-1}(p'))\le d_{G/e}\big( h(p) , p' \big) + 1\le \d(G/e) + 1$.
By symmetry we can assume that $p'\in[h(y)h(z)]$.
By Lemma \ref{l:dist1_G/e} there is $p_1\in[yz]$ with $d_G(h_0^{-1}(p'),p_1)\le (7/3)\, \d(G/e)$.
Thus,
$$
\d(G)=d_{G}(p,[yz]\cup[zx])\le d_G(p,p_1)\le d_{G}(p,h_0^{-1}(p')) + d_G(h_0^{-1}(p'),p_1) \le \frac{10}3 \,\d(G/e) + 1.
$$

$(B)$
Assume now that $h([xy])$ is not a geodesic in $G/e$. Let $[h(x)h(y)]$ be a geodesic in $G/e$ joining $h(x)$ and $h(y)$.
By Lemma \ref{l:dist1_G/e} there is $p'\in[h(x)h(y)]$ with $d_G(p,h_0^{-1}(p'))\le 2\d(G/e)$.

\smallskip

$(B.1)$
Consider the case $n=2$, \emph{i.e.}, $h([yz])$ and $h([zx])$ are geodesics in $G/e$.
Then there is $p''\in h([yz])\cup h([zx])$ with $d_{G/e}(p', p'')\le \d(G/e)$.
By Proposition \ref{l:quasi_isom}, we have
$$
d_{G}\big(h_0^{-1}(p'),h_0^{-1}(p'')\big)\le d_{G/e}( p' , p'' ) + 1\le \d(G/e) + 1.
$$
Thus, $h_0^{-1}(p'')\in [yz] \cup [zx]$ and
$$
\d(G)=d_{G}(p,[yz]\cup[zx])\le d_G(p,h_0^{-1}(p''))\le d_{G}(p,h_0^{-1}(p')) + d_G\big(h_0^{-1}(p'),h_0^{-1}(p'')\big) \le 3\d(G/e) + 1.
$$

$(B.2)$
Consider the case $n=1$.
By symmetry we can assume that $h([yz])$ is a geodesic in $G/e$ and let $[h(z)h(x)]$ be a geodesic in $G/e$ joining $h(z)$ and $h(x)$.
Then there is $p''\in h([yz])\cup[h(z)h(x)]$ with $d_{G/e}(p', p'')\le \d(G/e)$.
By Proposition \ref{l:quasi_isom}, we have
$$
d_{G}\big(h_0^{-1}(p'),h_0^{-1}(p'')\big)\le d_{G/e}( p' , p'') + 1 \le \d(G/e) + 1.
$$
If $p''\in h([yz])$, then we obtain
$$
\d(G)=d_{G}(p,[yz]\cup[zx])\le  d_G(p,h_0^{-1}(p'')) \le d_{G}(p,h_0^{-1}(p')) + d_G(h_0^{-1}(p'),h_0^{-1}(p'')) \le 3\d(G/e) + 1.
$$
If $p''\in [h(z)h(x)]$, then by Lemma \ref{l:dist1_G/e} there is $p_1\in[zx]$ with $d_G(h_0^{-1}(p''),p_1)\le (7/3)\,\d(G/e)$.
Then we have
$$
\begin{aligned}
\d(G)& =d_{G}\big(p,[yz]\cup[zx]\big)\le d_G(p,p_1)\\
&\le d_{G}\big(p,h_0^{-1}(p')\big) + d_G\big(h_0^{-1}(p'),h_0^{-1}(p'')\big) + d_G\big(h_0^{-1}(p''),p_1\big) \\
&\le \frac{16}3 \,\d(G/e) + 1.
\end{aligned}
$$
\end{proof}

The bounds in Theorem \ref{t:quanti} are sharp, as the following examples show.

\begin{example}\label{ex:1-0}
 Let $G_0$ be the diamond graph, \emph{i.e.}, the complete graph with $4$ vertices $K_4$ without one edge (see Figure \ref{fig:Ex1-0}).
 Let $e$ be the edge joining the two vertices with degree 3 in $G_0$. Then $G_0/e$ is isomorphic to the path graph with $3$ vertices $P_3$.
 Clearly, we have $\d(G_0)=1$ and $\d(G_0/e)=0$. This fact allows to obtain many graphs $G$ attaining the upper bound in Theorem \ref{t:quanti}:
 Consider any tree $T$ and fix vertices $v\in V(T)$ and $u\in V(G_0\setminus \{ e\})$.
 Let  $G$ be the graph obtained from $G_0$ and $T$ by identifying the vertices $u$ and $v$.
 Then $\d(G)=\d(G_0)=1$ and $\d(G/e) = \d(G_0/e)=0$, since $G/e$ and $G_0/e$ are trees.
 \end{example}

 \begin{example}
If $T$ is any tree and $e$ is any edge of $T$, then $T/e$ is also a tree,
$\d(T)=\d(T/e)=0$, and so, the lower bound in Theorem \ref{t:quanti} is attained.

 \end{example}

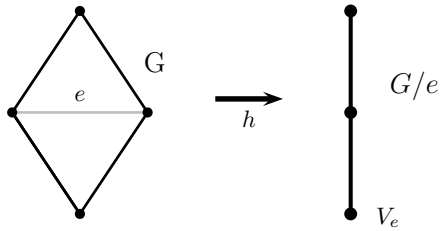
\begin{figure}[h]
  \centering
  \scalebox{0.9}
  {\begin{pspicture}(-1.2,-1)(5,3.7)
  \psline[linewidth=0.04cm,linecolor=lightgray]{-}(-1,1.5)(1,1.5)
  \psline[linewidth=0.04cm,linecolor=black]{*-*}(0,3)(-1,1.5)(0,0)
  \psline[linewidth=0.04cm,linecolor=black]{*-*}(-1,1.5)(0,0)(1,1.5)
  \psline[linewidth=0.04cm,linecolor=black]{-}(1,1.5)(0,3)
  \uput[90](0,1.5){$e$}
  \uput[40](0.8,2){\Large{G}}
  %hasta aquí T_E
  \psline[linewidth=0.06cm,fillcolor=black,dotsize=0.07055555cm 2.0]{*-*}(4,1.5)(4,3)
  \psline[linewidth=0.06cm,fillcolor=black,dotsize=0.07055555cm 2.0]{-*}(4,1.5)(4,0)
  %\uput[0](4.1,.5){$e'$}
  \uput[-10](4.2,0){$V_e$}
  \uput[20](4.4,1.75){\Large{$G/e$}}
  %hasta aquí Tripod
  \psline[linewidth=0.08cm,fillcolor=black]{->}(2,1.7)(3,1.7)
  \uput[-90](2.5,1.7){$h$}
  %function f_{xyz}
  \end{pspicture}}
  \caption{An example for which the upper bound in Theorem \ref{t:quanti} is attained ($\d(G)=1$ and $\d(G/e)=0$).}
  \label{fig:Ex1-0}
\end{figure}

\smallskip

Theorems \ref{t:cutedge} and \ref{t:propercycle} below allow to improve Theorem \ref{t:quanti} in some special cases.
In order to do it we need some previous result.

\smallskip

We say that a vertex $e$ in a graph $G$ is a \emph{cut-vertex} if $G\setminus v$ is not connected.
We say that an edge $e \in E(G)$ is a \emph{cut-edge} if $G\setminus e$ is not connected.
A graph is \emph{two-connected} if it is connected and it does not contain cut-vertices.
Given a graph $G$, we say that a family of subgraphs $\{G_{s} \}_s$ of $G$ is a \emph{T-decomposition} of $G$ if $\cup_s G_{s} = G $ and $G_{s}\cap G_{r}$   is either a \emph{cut-vertex} or the empty set for each $s\neq r$. Every graph has a T-decomposition, as the following example shows. Given any edge in $G$, let us consider the maximal two-connected
subgraph containing it. We call to the set of these maximal two-connected subgraphs $\{G_s\}_s$
the \emph{canonical T-decomposition} of $G$.

\smallskip

In \cite{BRSV2} the authors obtain the following result about T-decompositions.

\begin{theorem}\cite[Theorem 3]{BRSV2}\label{t:}
Let $ G $ be a graph and $\{G_{s} \}_s$ be any T-decomposition of $G$, then
$$
\delta(G) =\sup_{s} \delta (G_{s}) .
$$
\end{theorem}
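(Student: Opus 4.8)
The plan is to prove the two inequalities $\sup_s\delta(G_s)\le\delta(G)$ and $\delta(G)\le\sup_s\delta(G_s)$ separately; the first is routine and the second carries all the content. For the lower bound, I would first observe that each piece $G_s$ is isometrically embedded in $G$: since $G_s$ meets the rest of $G$ only at cut-vertices, any path joining $x,y\in G_s$ that leaves $G_s$ must re-enter through the very cut-vertex it used to exit, which only increases its length. Hence $d_G(x,y)=d_{G_s}(x,y)$ and every geodesic of $G_s$ is a geodesic of $G$. Consequently any geodesic triangle of $G_s$ is a geodesic triangle of $G$ with the same thin constant, so $\delta(G_s)\le\delta(G)$, and taking the supremum over $s$ gives $\sup_s\delta(G_s)\le\delta(G)$.

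For the reverse inequality set $\delta_0=\sup_s\delta(G_s)$; we may assume $\delta_0<\infty$ (otherwise there is nothing to prove), so each $G_s$ is hyperbolic. The main device I would introduce is the tree structure of the decomposition: form the bipartite graph $\mathcal{T}$ whose nodes are the pieces $G_s$ and the cut-vertices, joining each cut-vertex to every piece containing it. Because the pieces are two-connected and meet only in single cut-vertices, $\mathcal{T}$ is a tree. The key observation, proved exactly as in the lower bound, is that a geodesic of $G$ joining two points visits precisely the pieces and cut-vertices lying on the $\mathcal{T}$-path between the nodes containing its endpoints: it must cross every separating cut-vertex and cannot detour into an off-path piece without using some cut-vertex twice.

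Now fix a geodesic triangle $T=\{x,y,z\}$ and a point $p\in[xy]$, and let $G_s$ be a piece containing $p$. Since $\mathcal{T}$ is a tree, the three $\mathcal{T}$-paths joining the nodes of $x,y,z$ form a tripod meeting at a median node $m$, and $G_s$ lies on this tripod. If $G_s$ is \emph{not} the median piece, then it lies strictly on one leg and is therefore traversed by exactly two of the three sides, say $[xy]$ and $[zx]$, which share the vertex $x$ and hence run along a common initial stretch of the tripod; in particular they enter and leave $G_s$ through the same pair of cut-vertices $c^-,c^+$ (with $c^-=x$ if $x\in G_s$). The two subarcs $[xy]\cap G_s$ and $[zx]\cap G_s$ are thus geodesics of $G_s$ with the same endpoints, i.e. a geodesic bigon; since every bigon is $\delta(G_s)$-thin, $p$ lies within $\delta(G_s)\le\delta_0$ of $[zx]\cap G_s\subset[yz]\cup[zx]$. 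If instead $G_s$ is the median piece, all three sides meet $G_s$, and taking as corners the common cut-vertices (or the vertices $x,y,z$ themselves when they lie in $G_s$) their restrictions form a genuine geodesic triangle in $G_s$, whose $\delta(G_s)$-thinness again places $p$ within $\delta_0$ of $[yz]\cup[zx]$. In every case $d_G(p,[yz]\cup[zx])\le\delta_0$, whence $\delta(T)\le\delta_0$ and therefore $\delta(G)\le\delta_0$.

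The delicate point, which I expect to be the main obstacle, is the bookkeeping around the cut-vertices: one must verify that the entry and exit vertices of the two sides traversing a common non-median piece genuinely coincide, so that one obtains a bigon rather than a geodesic quadrilateral, and one must handle the borderline subcases where $p$ is itself a cut-vertex or where a triangle vertex falls inside $G_s$. The coincidence of entry/exit vertices follows from two sides sharing a vertex running along a common stretch of the tripod, and the boundary subcases are settled by a short continuity argument, choosing either adjacent piece when $p$ is a cut-vertex.
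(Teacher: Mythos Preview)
The paper does not prove this statement at all: Theorem~\ref{t:} is quoted verbatim from \cite[Theorem~3]{BRSV2} and used as a black box in the proof of Theorem~\ref{t:cutedge}. There is therefore no ``paper's own proof'' to compare your proposal against.

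That said, your argument is the natural one and is essentially correct. The isometric-embedding observation gives the easy inequality, and the tripod/median analysis on the decomposition tree is exactly how one proves the hard inequality. One small slip: you justify that $\mathcal{T}$ is a tree by asserting that ``the pieces are two-connected'', but the definition of a T-decomposition in the paper does \emph{not} require this --- two-connectedness is only guaranteed for the \emph{canonical} T-decomposition. The conclusion that $\mathcal{T}$ is a tree is nevertheless correct for any T-decomposition, since a cycle in $\mathcal{T}$ would produce, for each cut-vertex $v$ on it, a path in $G\setminus v$ around the cycle connecting the two sides of $v$, contradicting that $v$ separates $G$. Once you patch that sentence, the rest of your bigon/triangle case analysis goes through as written.
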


The following result improves Theorem \ref{t:quanti} when $e$ is a cut-edge.

\begin{theorem}
\label{t:cutedge}
Let $G$ be a graph and $e$ a cut-edge in $G$. Then
$$
\d(G/e) = \d(G) .
$$
\end{theorem}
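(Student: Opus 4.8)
The plan is to compare the canonical T-decompositions of $G$ and of $G/e$ and then invoke Theorem \ref{t:}. Write $e=[A,B]$. Since $e$ is a cut-edge it lies on no cycle of $G$, so the subgraph $\bar{e}$ consisting of $e$ together with its endpoints is two-connected (removing either endpoint leaves a single vertex, hence no cut-vertex) and maximal with this property (adjoining any further vertex turns $A$ or $B$ into a cut-vertex). Thus $\bar{e}$ is one of the blocks $\{G_s\}_s$ of the canonical T-decomposition of $G$, and trivially $\d(\bar{e})=0$ because $\bar{e}$ is isometric to a single segment. Theorem \ref{t:} then gives $\d(G)=\sup_s\d(G_s)=\sup\{\d(G_s):G_s\neq\bar{e}\}$, the block $\bar{e}$ being irrelevant to the supremum since every $\d(G_s)\ge 0$; if $\bar{e}$ is the only block then $G=\bar{e}$ is a single edge and $\d(G)=0=\d(G/e)$, so we may assume there is at least one other block.

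The heart of the argument is to show that the blocks of $G/e$ are exactly the blocks $G_s\neq\bar{e}$ of $G$, each isometric to its counterpart. Let $G_A$ and $G_B$ be the two components of $G\setminus e$, with $A\in G_A$ and $B\in G_B$, so that $V(G)=V(G_A)\sqcup V(G_B)$ and $e$ is the unique edge joining them. Any block $G_s\neq\bar{e}$ is two-connected, hence cannot contain both $A$ and $B$: such a block would supply a cycle through $A$ and $B$ avoiding $e$, contradicting that $e$ is a bridge. Therefore each $G_s\neq\bar{e}$ is contained entirely in $G_A$ or in $G_B$. Contracting $e$ identifies $A$ and $B$ into $V_e$ but leaves each such $G_s$ unchanged as an abstract graph (at most relabelling the vertex $A$ or $B$ as $V_e$); consequently its intrinsic metric, and so its hyperbolicity constant $\d(G_s)$, is preserved, and $G_s$ remains a maximal two-connected subgraph of $G/e$.

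Next I would verify that contraction neither merges old blocks nor creates new ones. In $G/e=G_A\cup G_B$ the two parts meet only at $V_e$, which is a cut-vertex separating $G_A\setminus\{V_e\}$ from $G_B\setminus\{V_e\}$; hence no two surviving blocks lie in a common two-connected subgraph, and no cycle through $V_e$ is created. Since every edge of $G/e$ lies in some surviving block, the family $\{G_s:G_s\neq\bar{e}\}$ is precisely the canonical T-decomposition of $G/e$. Applying Theorem \ref{t:} once more yields $\d(G/e)=\sup\{\d(G_s):G_s\neq\bar{e}\}=\d(G)$.

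The delicate point throughout is the block analysis, and it is exactly where the cut-edge hypothesis is used: the fact that $e$ lies on no cycle is needed both to recognize $\bar{e}$ as its own block of $G$ and to guarantee that, after $A$ and $B$ are identified, the remaining blocks neither combine with one another nor acquire new cycles through $V_e$. Once this correspondence of T-decompositions is established, everything reduces to the additivity of $\d$ in Theorem \ref{t:}.
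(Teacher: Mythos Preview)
Your proof is correct and follows the same strategy as the paper: use Theorem \ref{t:} on T-decompositions of $G$ and of $G/e$ whose pieces coincide (up to relabelling $A$ or $B$ as $V_e$) once the edge $e$ is removed. The only difference is the choice of T-decomposition. You work with the \emph{canonical} block decomposition, which obliges you to verify that contracting $e$ neither merges old blocks nor creates new ones through $V_e$. The paper instead takes the coarsest useful T-decomposition: simply the two connected components $G_A$, $G_B$ of $G\setminus e$ together with the edge $e$ itself. Since $G_A\cap e=\{A\}$, $G_B\cap e=\{B\}$ and $G_A\cap G_B=\emptyset$, this is already a T-decomposition of $G$; after contraction the images $G_A'$, $G_B'$ meet only in $V_e$, giving a T-decomposition of $G/e$ with no block-by-block analysis needed. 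Both routes land on $\d(G)=\sup_s\d(G_s)=\d(G/e)$ via Theorem \ref{t:}; the paper's is just shorter because a T-decomposition need not be canonical.
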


\begin{proof}
Consider the connected components $\{G_{s} \}_s$ of $G\setminus e$.
Then $\{G_{s} \}_s \cup \{e\}$ is a T-decomposition of $G$ and Theorem \ref{t:} gives
$$
\delta(G)
=\max \Big\{\sup_{s} \delta (G_{s}), \, \d(e) \Big\}
=\max \Big\{\sup_{s} \delta (G_{s}), \, 0 \Big\}
= \sup_{s} \delta (G_{s}).
$$

For each $s$, let $G_{s}'$ be the subgraph of $G/e$ obtained from $G_{s}$ by replacing the vertex in $\{A,B\}$ by $V_e$.
Note that $G_{s}'$ and $G_{s}$ are isomorphic (and isometric) and, therefore, $\delta (G_{s}') = \delta (G_{s})$.
Since $\{G_{s}' \}_s$ is a T-decomposition of $G/e$, Theorem \ref{t:} gives
$$
\delta(G/e)
= \sup_{s} \delta (G_{s}')
= \sup_{s} \delta (G_{s})
= \delta(G).
$$
\end{proof}

We say that a graph \emph{has proper cycles} if each edge belongs at most to a cycle.

The \emph{circumference} $c(G)$ of the graph $G$ is the supremum of the lengths of cycles in $G$.

We need the following result.

\begin{lemma}\cite[Lemma 2.1]{RT1}\label{l:simple}
In any graph $G$,
$$
\d(G)= \sup \big\{ \d(T): \, T \,\text{ is a geodesic triangle that
is a  cycle}\,\big\} \,.
$$
\end{lemma}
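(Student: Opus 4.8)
The inequality $\d(G) \ge \sup\{\d(T): T \text{ is a cycle}\}$ is immediate, since every geodesic triangle that is a cycle is in particular a geodesic triangle. Thus the plan is to prove the reverse inequality. For this it suffices to show that for every geodesic triangle $T$ there is a geodesic triangle $T'$ that is a cycle (possibly a bigon) with $\d(T') \ge \d(T)$: then $\sup\{\d(T'): T' \text{ cycle}\} \ge \d(T)$ for every $T$, and taking the supremum over $T$ yields the claim. If $\d(G)=0$ there is nothing to prove, so we may assume $\d(G)>0$ and fix a geodesic triangle $T=\{x,y,z\}$ with $\d(T)>0$.

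Since each side of $T$ is compact and the distance to the union of the other two sides is continuous, $\d(T)$ is attained; after relabelling there is $p\in[xy]$ with $d_G\big(p,[yz]\cup[zx]\big)=\d(T)>0$. Write $U:=[yz]\cup[zx]$. The first step is to trim the two ``tails'' of $[xy]$: moving from $p$ toward $x$ along $[xy]$, let $a$ be the first point lying on $U$, and moving from $p$ toward $y$, let $b$ be the first such point. These exist because $x,y\in U$, and $a\ne p\ne b$ because $d_G(p,U)>0$. By construction the open subarc of $[xy]$ between $a$ and $b$ is disjoint from $U$, it contains $p$, and the closed subarc $[ab]\subseteq[xy]$ is a geodesic; moreover $[ab]\cap U\subseteq\{a,b\}$.

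The second step is to close up $[ab]$ using subarcs of the remaining two sides and to remove residual overlaps. If $a,b$ lie on the same side, then $[ab]$ together with the subarc of that side between $a$ and $b$ is a geodesic bigon $T'=\{a,b\}$. If $a,b$ lie on different sides, say $a\in[zx]$ and $b\in[yz]$, we take the candidate $T'$ with vertices $a,b,z$ and sides $[ab]\subseteq[xy]$, the subarc of $[yz]$ from $b$ to $z$, and the subarc of $[zx]$ from $z$ to $a$; since these last two subarcs may still coincide on a final segment near $z$, we let $z'$ be the branch point where they separate and replace $z$ by $z'$, obtaining two sides that meet only at $z'$. In either case every side of $T'$ is a subarc of a side of $T$ (hence a geodesic), and these subarcs meet pairwise only at the vertices, so $T'$ is a geodesic triangle that is a cycle. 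Finally, $p$ lies on the side $[ab]$ of $T'$, and the union $R$ of the remaining side(s) of $T'$ satisfies $R\subseteq U$, whence $\d(T')\ge d_G(p,R)\ge d_G(p,U)=\d(T)$, as required.

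The main obstacle is precisely the topological cleanup in the second step: one must verify, in each configuration (according to whether each of $a,b$ lies on $[zx]$, on $[yz]$, or on their intersection, and whether the branch point $z'$ collapses onto $a$ or $b$), that the trimmed figure is genuinely a simple closed curve and that no pair of its sides overlaps on a nondegenerate arc, degenerating gracefully to a bigon when necessary. The control of the thin constant, by contrast, is automatic: because every side of $T'$ is contained in a side of $T$, passing from $T$ to $T'$ can only enlarge the distance from $p$ to the opposite sides, so the inequality $\d(T')\ge\d(T)$ persists in all cases.
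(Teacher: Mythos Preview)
The paper does not prove this lemma; it is quoted from \cite{RT1} and used as a black box, so there is no in-paper argument to compare against.

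Your plan is the standard one and is sound: trim an arbitrary geodesic triangle $T$ down to a simple closed curve $T'$ that still carries the witness point $p$, with the opposite sides of $T'$ contained in those of $T$, so that $\d(T')\ge\d(T)$. The identification of $a,b$ as the first points of $U=[yz]\cup[zx]$ met along $[xy]$ from $p$ is correct, the resulting $[ab]$ meets $U$ only at $\{a,b\}$, and the final inequality follows exactly as you say.

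The one place where more is needed than you wrote is the claim that taking $z'$ to be ``the branch point where they separate'' makes $[bz']$ and $[z'a]$ meet only at $z'$. Two geodesics issuing from $z$ can diverge and later reconverge (route them around opposite sides of a $4$-cycle before continuing), so a single pass at the first branch point need not yield a simple closed curve. The fix is easy and fits your framework: choose $z'\in[bz]\cap[za]$ minimizing $L([bz'])+L([z'a])$. If the two resulting subarcs met at some $w\notin\{a,b,z'\}$, then $w\in[bz]\cap[za]$ would give a strictly smaller value, a contradiction; if $w\in\{a,b\}$ the figure collapses to the bigon case you already handle. With that refinement the argument is complete.
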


\begin{proposition} \label{p:propercycle}
Let $G$ be a graph with proper cycles. Then
$$\d(G) = \frac14 c(G) .$$
\end{proposition}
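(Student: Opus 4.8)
The plan is to reduce the computation to the canonical $T$-decomposition of $G$ via Theorem~\ref{t:}, using the well-known value $\d(C_\ell)=\ell/4$ for the cycle graph $C_\ell$ (the identity already invoked in Example~\ref{ex:wn-10}). The step I expect to be the main obstacle is a structural observation: because $G$ has proper cycles, \emph{every maximal two-connected subgraph of $G$ is either a single edge or a cycle}. To prove it I would argue by contradiction. Suppose $H$ is a maximal two-connected subgraph that is neither a single edge nor a cycle. Then $H$ is two-connected and not a cycle, so it admits an open ear decomposition $H=C_0\cup P_1\cup\cdots\cup P_k$ with $k\ge 1$, in which $C_0$ is a cycle and $P_1$ is a path joining two distinct vertices $a,b\in C_0$ with interior disjoint from $C_0$. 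Writing $C_0=\alpha\cup\beta$ for the two arcs of $C_0$ from $a$ to $b$ (both nonempty since $a\neq b$), the subgraphs $C_0=\alpha\cup\beta$ and $\alpha\cup P_1$ are two distinct cycles of $G$ (they share the arc $\alpha$ but differ on $\beta$ versus $P_1$), and every edge of the arc $\alpha$ belongs to both of them; this contradicts the assumption that each edge of $G$ lies in at most one cycle.

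Granting this, I would fix the canonical $T$-decomposition $\{G_s\}_s$ of $G$, whose pieces are precisely the maximal two-connected subgraphs, hence single edges or cycles. By Theorem~\ref{t:}, $\d(G)=\sup_s \d(G_s)$. A single-edge piece has $\d(G_s)=0$, whereas a cycle piece of length $\ell$ is isometric to $C_\ell$ and has $\d(G_s)=\ell/4\ge 3/4$. Therefore the single-edge pieces do not affect the supremum and
\[
\d(G)=\frac14\,\sup\big\{L(C):\ C\text{ is a cycle piece of }G\big\}.
\]

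Finally I would match the cycle pieces with the cycles of $G$. Every cycle piece is a cycle of $G$ by construction. Conversely, given any cycle $C$ of $G$, the maximal two-connected subgraph containing it is, by the structural fact, a single edge or a cycle; since $C$ has at least three edges it must be a cycle containing $C$, and hence equal to $C$, so $C$ is itself a cycle piece. Thus the cycles of $G$ and the cycle pieces coincide, the supremum above equals $\tfrac14\,c(G)$ by the definition of the circumference, and we conclude $\d(G)=\tfrac14\,c(G)$. The case in which $G$ is a tree is included under the convention $c(G)=0$, for then $\d(G)=0$ as well; and when the cycle lengths are unbounded both sides equal $+\infty$. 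As noted, the only genuinely delicate point is the structural lemma of the first paragraph; everything else is a direct application of Theorem~\ref{t:}.
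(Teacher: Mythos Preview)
Your proof is correct and takes a genuinely different route from the paper's. The paper argues both inequalities directly from the definition of $\delta$: for the upper bound it invokes Lemma~\ref{l:simple}, so that it suffices to bound $\delta(T)$ for geodesic triangles $T$ that are cycles, and then $\delta(T)\le L(T)/4\le c(G)/4$ by the elementary estimate $d_G(p,[yz]\cup[zx])\le d_G(p,\{x,y\})\le \tfrac12 d_G(x,y)\le \tfrac14 L(T)$; for the lower bound it fixes an arbitrary cycle $C$, observes that the proper-cycles hypothesis forces $d_C=d_G$ on $C$ (so $C$ sits isometrically in $G$), and then a geodesic bigon in $C$ with antipodal endpoints gives $\delta(G)\ge L(C)/4$. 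Your argument instead uses the proper-cycles hypothesis once, structurally, to show via an ear decomposition that every maximal two-connected piece is a single edge or a cycle, after which Theorem~\ref{t:} together with $\delta(C_\ell)=\ell/4$ finishes immediately. The paper's route is more self-contained (no block/ear machinery, and the upper bound actually holds for \emph{any} graph, not just those with proper cycles), while yours makes the combinatorial picture explicit and explains why the formula is so clean: up to tree-gluing, $G$ is a disjoint union of cycles. One small remark: the identity $\delta(C_n)=n/4$ that you cite is stated in the paragraph preceding Example~\ref{ex:wn-10} and again in Example~\ref{ex:C_abc}, rather than in Example~\ref{ex:wn-10} itself.
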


\begin{proof}
In order to bound $\d(G)$, by Lemma \ref{l:simple} it suffices to consider
geodesic triangles $T = \{x, y, z\}$ that are cycles.
Hence, $L(T) \le c(G)$, $d_G(x,y)\le L(T)/2 \le c(G)/2$ and $d_G(p,[yz]\cup[zx])\le d_G(p,\{x,y\})\le d_G(x,y)/2 \le c(G)/4$
for every $p\in [xy]$.
Therefore, $\d(G) \le c(G)/4$.
Consider any fixed cycle $C$ in $G$.
Since $G$ has proper cycles, $d_C(x,y) = d_G(x,y)$ for every $x,y\in C$.
Choose $x,y \in C$ with $d_C(x,y) = L(C)/2$ and denote by $g_1,g_2$ the geodesics in $C$ joining $x$ and $y$ with $g_1 \cup g_2=C$ and $g_1 \cap g_2=\{x,y\}$.
Denote by $B$ the geodesic bigon $B=\{g_1, g_2\}$.
If $p$ is the midpoint in $g_1$, then $\d(B)\ge d_G(p,g_2) = d_G(p,\{x,y\})= d_G(x,y)/2 = L(C)/4$.
Thus $\d(G) \ge c(G)/4$, and we conclude $\d(G) = c(G)/4$.
\end{proof}

We denote by $\mathfrak{C}(G)$ the set of cycles $C$ in $G$ with $L(C)=c(G)$ if $c(G)<\infty$.
The following result improves Theorem \ref{t:quanti} for graphs with proper cycles.

\begin{theorem}
\label{t:propercycle}
Let $G$ be a graph with proper cycles.
\begin{itemize}
\item If either $c(G)=\infty$ or $\mathfrak{C}(G)$ contains at least two cycles, then $\d(G/e) = \d(G)$ for every $e\in E(G)$.
\item If $\mathfrak{C}(G)$ contains just a single cycle $C$, then $\d(G/e) = \d(G)$ if and only if $e\notin E(C)$.
\end{itemize}
\end{theorem}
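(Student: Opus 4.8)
The plan is to reduce everything to the circumference formula $\d(G)=\frac14 c(G)$ from Proposition \ref{p:propercycle}, so that comparing $\d(G/e)$ with $\d(G)$ amounts to comparing $c(G/e)$ with $c(G)$. First I would dispose of two easy situations. If $c(G)=\infty$, then Proposition \ref{p:propercycle} gives $\d(G)=\infty$, so $G$ is not hyperbolic, and Theorem \ref{t:quali} forces $G/e$ to be non-hyperbolic as well; hence $\d(G/e)=\infty=\d(G)$, which settles the $c(G)=\infty$ alternative of the first item. If $e$ is a cut-edge (equivalently, $e$ lies in no cycle), Theorem \ref{t:cutedge} gives $\d(G/e)=\d(G)$ at once; such an $e$ automatically satisfies $e\notin E(C)$ for every cycle $C$, so this is consistent with both items. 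Thus I may assume from now on that $c(G)<\infty$ and that $e$ belongs to a cycle; since $G$ has proper cycles, this cycle is unique and I denote it by $C_e$.

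The key structural step is to understand how cycles behave under the contraction $h\colon G\to G/e$. Because every edge of $G$ lies in at most one cycle, $C_e$ is the only cycle of $G$ containing $e$; in particular $A$ and $B$ can have a common neighbour only when $C_e$ is a triangle. Using this I would check that $G/e$ again has proper cycles: every cycle of $G/e$ either avoids $V_e$ (and then is an unchanged cycle of $G$), or passes through $V_e$, in which case lifting its two edges incident to $V_e$ shows it is either a cycle of $G$ through $A$ or through $B$ (of the same length) or it is the contraction $C_e/e$. Moreover, when $C_e$ is a triangle the two edges that merge into a single edge $[V_e,v]$ form a bridge of $G/e$, since any cycle of $G/e$ through $[V_e,v]$ would lift to a second cycle of $G$ containing $[A,v]$ or $[B,v]$, contradicting proper cycles. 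Hence distinct cycles of $G/e$ arise from distinct, edge-disjoint cycles of $G$, so $G/e$ has proper cycles and Proposition \ref{p:propercycle} yields $\d(G/e)=\frac14 c(G/e)$.

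With this in hand the computation of $c(G/e)$ is routine: contraction leaves the length of every cycle $D\neq C_e$ unchanged and replaces $C_e$ by a cycle of length $L(C_e)-1$ (which disappears when $L(C_e)=3$). Writing $M:=\sup\{L(D): D \text{ a cycle of } G,\ D\neq C_e\}$, I would conclude
\begin{equation*}
c(G/e)=\max\{M,\,L(C_e)-1\}\le \max\{M,\,L(C_e)\}=c(G),
\end{equation*}
and, since cycle lengths are integers, that $c(G/e)=c(G)$ holds precisely when $M=c(G)$, that is, precisely when some longest cycle of $G$ is different from $C_e$.

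Finally I would translate this equivalence into the two items. If $\mathfrak{C}(G)$ contains at least two cycles, then for any $e$ lying on a cycle at least one longest cycle differs from $C_e$ (if $C_e$ is itself longest, use a second one), so $M=c(G)$ and $\d(G/e)=\d(G)$; together with the cut-edge and the $c(G)=\infty$ cases this proves the first item. If $\mathfrak{C}(G)=\{C\}$ is a single cycle, then for $e\notin E(C)$ either $e$ is a cut-edge or $C\neq C_e$ is a longest cycle avoiding $e$, giving $M=c(G)$ and $\d(G/e)=\d(G)$; whereas for $e\in E(C)$ we have $C_e=C$ and $M<c(G)$, so $c(G/e)=c(G)-1$ and $\d(G/e)=\frac14\big(c(G)-1\big)<\d(G)$. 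This establishes the claimed equivalence. The main technical obstacle is the middle step, namely verifying that $G/e$ still has proper cycles, and in particular handling the edge-merging that occurs exactly when $C_e$ is a triangle.
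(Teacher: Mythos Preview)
Your proof is correct and follows essentially the same route as the paper's: both reduce the question to comparing $c(G/e)$ with $c(G)$ via Proposition \ref{p:propercycle}, and both observe that contraction changes only the length of the unique cycle through $e$. The chief difference is that you actually justify the assertion that $G/e$ again has proper cycles (including the triangle case), whereas the paper states this without proof; your separate handling of $c(G)=\infty$ via Theorem \ref{t:quali} and of cut-edges via Theorem \ref{t:cutedge} is a harmless variation on the paper's direct circumference argument.
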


\begin{proof}
Note that $G/e$ also has proper cycles for every $e\in E(G)$ and $c(G/e)$ is equal to either $c(G)$ or $c(G)-1$.
Assume first that either $c(G)=\infty$ or $\mathfrak{C}(G)$ contains at least two cycles, and fix any $e\in E(G)$.
Thus, $c(G/e) = c(G)$ and Proposition \ref{p:propercycle} gives $\d(G/e) = \d(G)$.
Assume now that $\mathfrak{C}(G)$ contains just a single cycle $C$.
If $e\notin E(C)$, then $c(G/e) = c(G)$ and $\d(G/e) = \d(G)$.
If $e\in E(C)$, then $c(G/e) = c(G)-1$ and Proposition \ref{p:propercycle} gives $\d(G/e) \neq \d(G)$.
\end{proof}

\

\section{Hyperbolicity of minor graphs} \label{Sect:minor}

In order to obtain results on hyperbolicity of minor graphs we deal now with the other transformation of graphs
involved in the definition of minor: the deletion of an edge.
Recall that $G\setminus e$ is the graph with $V(G\setminus e)=V(G)$ and $E(G\setminus e)=E(G) \setminus\{e\}$.
Theorem \ref{thm_dG_leq_6dG-e_+_Lcycle} below provides quantitative relations between $\delta(G\setminus e)$ and $\delta(G)$, where $e$ is any edge of $G$.

One can expect that the edge deletion is a monotone transformation for the hyperbolicity constant.
However, the following examples provide two families of graphs in which the hyperbolicity constant increases and decreases,
respectively, by removing some edge.

\begin{example}\label{ex:C_abc}
Consider $G_n$ as a cycle graph $C_n$ with $n\ge 3$ vertices and fix $e_n \in E(G_n)$.
Thus, $G_n\setminus e_n$ is isomorphic to a path graph $P_{n}$. Since $\delta(C_n)=n/4$ and $\delta(P_{n})=0$,
we have $\delta(G_n ) > \delta(G_n\setminus e_n)$ and
$$
\lim_{n\to \infty} \big( \delta(G_n ) - \delta(G_n\setminus e_n)\big) = \infty.
$$
\end{example}

\begin{example}\label{ex:C_abc2}
Let $C_{a,b,c}$ be the graph with three disjoint paths joining two vertices with lengths $a \le b \le c$.
Consider an edge $e$ of $C_{a,b,c}$ contained in the path with length $a$.
It is easy to check that $\delta(C_{a,b,c}\setminus e)=(c+b)/4$ and
\cite[Lemma 19]{MRSV} gives that $\delta(C_{a,b,c}) = (c+\min\{b,3a\})/4$.
If $3a< b$, then $\delta(C_{a,b,c}) < \delta(C_{a,b,c}\setminus e)$.
In particular, consider $\G_n=C_{n,4n,4n}$ and fix $e_n \in E(\G_n)$ contained in the path with length $n$.
Thus, $\delta(\G_n)=7n/4$ and $\delta(\G_n\setminus e_n)=2n$.
So, $\delta(\G_n ) < \delta(\G_n\setminus e_n)$ and
$$
\lim_{n\to \infty} \big( \delta(\G_n \setminus e_n) - \delta(\G_n)\big) = \infty.
$$
\end{example}

In \cite{CPeRS} the authors obtain quantitative information about the distortion of the hyperbolicity constant
of the graph $G \setminus e$ obtained from the graph $G$ by deleting an arbitrary edge $e$ from it.
The following theorem is a weak version of their main result.

\begin{theorem}\cite[Theorem 3.15]{CPeRS}\label{thm_dG_leq_6dG-e_+_Lcycle}
   Let $G$ be a graph and $e=[A,B]\in E(G)$ with $G\setminus e$ connected. Then
   \begin{equation}\label{ec_thm_dG_leq_6dG-e_+_Lcycle}
   \max \Big\{\frac15 \delta(G\setminus e), \frac14 \big(d_{G\setminus e}(A,B)+1\big) \Big\}
   \leq \delta(G) \leq 6 \delta(G\setminus e) + d_{G\setminus e}(A,B).
   \end{equation}
\end{theorem}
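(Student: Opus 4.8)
The plan is to control distances in $G$ and $G\setminus e$ by a single additive constant, then convert the resulting estimates into the stated bounds, exploiting the key fact that $L:=d_{G\setminus e}(A,B)$ is itself controlled by $\delta(G)$. First I would record the elementary comparison
\[
d_G(u,v)\le d_{G\setminus e}(u,v)\le d_G(u,v)+(L-1),\qquad u,v\in G\setminus e,
\]
whose left inequality is immediate and whose right inequality holds because a $G$-geodesic is a simple path, hence meets $e$ at most once; replacing that single traversal of $e$ by a shortest $A$--$B$ path of $G\setminus e$ (of length $L$) gives a path of $G\setminus e$ and increases the length by at most $L-1$. Thus the inclusion $G\setminus e\hookrightarrow G$ is a $(1,L-1)$-quasi-isometric embedding (yielding, via Theorem \ref{invarianza}, qualitative equivalence but not the sharp constants), and every geodesic we meet crosses $e$ at most once.

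Next I would establish the lower bound $\tfrac14(L+1)\le\delta(G)$, which is both the cleanest step and the one unlocking the others. Let $P$ be a geodesic of $G\setminus e$ joining $A$ and $B$, so $L(P)=L$, and set $C:=P\cup e$, a cycle of length $L+1$. The crucial claim is that $C$ is \emph{isometrically} embedded in $G$: since $P$ is a geodesic of $G\setminus e$, no path of $G\setminus e$ can shortcut between two points of $P$, while the only shortcut created by $e$ is exactly the complementary arc of $C$; hence $d_G$ restricted to $C$ equals the cycle metric. Inscribing in $C$ an antipodal geodesic bigon (two arcs of length $(L+1)/2$) and taking the midpoint of one arc gives a geodesic figure in $G$ of thinness $(L+1)/4$, so $\delta(G)\ge(L+1)/4$. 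In particular $L\le 4\delta(G)-1$, and this is what lets the additive $L$-terms below be absorbed into multiples of $\delta(G)$.

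For the two multiplicative bounds I would compare the extremal geodesic triangle of one graph with a geodesic triangle of the other on the same three vertices. By Lemma \ref{l:simple} and Theorem \ref{t:TrianVMp} it suffices to treat a geodesic triangle $T=\{x,y,z\}$ that is a cycle with $x,y,z\in J(\cdot)$ and a point $p$ on one side realizing the hyperbolicity constant. The basic tool is a \emph{side-comparison}: given $a,b$, a $G$-geodesic $[ab]_G$ and a $G\setminus e$-geodesic $[ab]_{G\setminus e}$ lie within bounded Hausdorff distance of each other, obtained by replacing a possible $e$-traversal of $[ab]_G$ by $P$ to form a $(1,L-1)$-quasigeodesic and then invoking quasigeodesic stability in the relevant hyperbolic graph. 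For the upper bound one starts with the extremal triangle in $G$, transports $p$ and the opposite sides to the corresponding $G\setminus e$-geodesics via this comparison, applies $\delta(G\setminus e)$-thinness there, transports back, and converts the last hop from $d_{G\setminus e}$ to $d_G$ with the comparison inequality; chaining the three estimates yields $\delta(G)\le 6\,\delta(G\setminus e)+L$. For the first lower bound one runs the same scheme starting from the extremal triangle in $G\setminus e$, first obtaining an estimate of shape $\delta(G\setminus e)\le\alpha\,\delta(G)+\beta L$ and then substituting $L\le 4\delta(G)-1$ to remove the additive term and reach $\delta(G\setminus e)\le 5\,\delta(G)$.

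The main obstacle is quantitative rather than structural: making the side-comparison precise enough to yield exactly the constants $6$ and $5$ rather than the cruder ones that generic quasigeodesic stability would give. This forces a case analysis according to how many of the three sides cross $e$ (none, one, two, or three) and where $p$ sits relative to $e$, treating separately the situations in which the relevant sides are already genuine geodesics in both graphs (no loss) and those in which an $e$-shortcut is genuinely used (saving at most $L-1\le 4\delta(G)$). Once this bookkeeping is organised as in the case analysis of Theorem \ref{t:quanti}, the two multiplicative bounds follow by summing contributions, and together with $\tfrac14(L+1)\le\delta(G)$ they give \eqref{ec_thm_dG_leq_6dG-e_+_Lcycle}.
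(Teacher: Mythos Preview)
This theorem is not proved in the present paper; it is quoted verbatim from \cite[Theorem~3.15]{CPeRS} and used only as input for Theorem~\ref{t:minor}. There is therefore no proof here to compare against.

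On the substance of your sketch: the argument for $\tfrac14(L+1)\le\delta(G)$ via the cycle $C=P\cup e$ is correct and essentially complete. The point you state somewhat tersely---that $d_G$ restricted to $C$ coincides with the intrinsic cycle metric---holds because any $G$-path between $u,v\in P$ that traverses $e$ decomposes into a $(G\setminus e)$-path from $u$ to one endpoint of $P$, the edge $e$, and a $(G\setminus e)$-path from the other endpoint to $v$; since $P$ is a $(G\setminus e)$-geodesic, each of these pieces is no shorter than the corresponding arc of $P$, so the total is at least the complementary $C$-arc. The resulting inequality $L\le 4\delta(G)-1$ is indeed the key device for absorbing additive $L$-terms into multiples of $\delta(G)$.

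For the two multiplicative inequalities your plan is the right one, and you have correctly identified the real obstacle: invoking $(1,L-1)$-quasigeodesic stability as a black box (Theorem~\ref{invarianza}) gives only unspecified constants, so the exact values $5$ and $6$ require an explicit case analysis on how many sides of the extremal triangle meet $e$ and where $p$ sits, in the style of the proof of Theorem~\ref{t:quanti}. Your proposal outlines this but does not carry it out, so as written it is a sound strategy for both bounds but not yet a proof of \eqref{ec_thm_dG_leq_6dG-e_+_Lcycle} with the stated constants.
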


Finally, Theorems \ref{t:quanti} and \ref{thm_dG_leq_6dG-e_+_Lcycle} give the following qualitative result.

\begin{theorem}\label{t:minor}
Let $G$ be a graph and $G'$ a (connected) minor graph of $G$ obtained by a finite number of edges removed and/or contracted.
Then $G$ is hyperbolic if and only if $G'$ is hyperbolic.
\end{theorem}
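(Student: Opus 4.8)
The plan is to argue by induction on the number $k$ of elementary operations (edge deletions, edge contractions, and deletions of isolated vertices) used to pass from $G$ to $G'$. For $k=0$ there is nothing to prove, so it suffices to analyze a single first operation $G\to G_1$ and then apply the inductive hypothesis to the pair $(G_1,G')$, which is joined by $k-1$ operations. Two cases are immediate from the results already proved. If the first operation is the contraction of an edge $e$, then $G_1=G/e$ is connected (contraction preserves connectedness) and Theorem~\ref{t:quali} gives that $G$ is hyperbolic if and only if $G/e$ is hyperbolic. If the first operation deletes an edge $e$ for which $G_1=G\setminus e$ remains connected, then the bounds in Theorem~\ref{thm_dG_leq_6dG-e_+_Lcycle} (note $d_{G\setminus e}(A,B)<\infty$, since $A,B$ lie in the same component) show that $\d(G)<\infty$ if and only if $\d(G\setminus e)<\infty$, i.e. $G$ is hyperbolic if and only if $G\setminus e$ is. In both cases the inductive hypothesis applied to $(G_1,G')$ finishes the argument.

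The main obstacle is the remaining case, where the first operation deletes a \emph{cut-edge} $e=[A,B]$, so that $G_1=G\setminus e$ splits into two components $G_a\ni A$ and $G_b\ni B$; here Theorem~\ref{thm_dG_leq_6dG-e_+_Lcycle} does not apply. The key observation is that finiteness of the number of operations saves us. Since the operations transforming $G_1$ into the \emph{connected} graph $G'$ act on disjoint descendants of $G_a$ and $G_b$, they commute and may be reordered so that all $G_a$-operations are performed first and all $G_b$-operations afterwards; connectedness of $G'$ then forces $G'$ to be a minor of one component, say $G_a$, obtained by at most $k-1$ operations, while the whole of the other component $G_b$ is deleted. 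Deleting every edge and vertex of $G_b$ in finitely many steps is possible only when $G_b$ is \emph{finite}, hence bounded, so $G_b$ is hyperbolic with $\d(G_b)\le\diam(G_b)<\infty$.

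It then remains to relate $\d(G)$ to $\d(G_a)$. Since $A\in G_a$ and $B\in G_b$, the family $\{G_a,\,e,\,G_b\}$ is a T-decomposition of $G$ (the pairwise intersections are the cut-vertices $A$, $B$, or $\emptyset$), so Theorem~\ref{t:} yields
$$
\d(G)=\max\big\{\d(G_a),\,\d(e),\,\d(G_b)\big\}=\max\big\{\d(G_a),\,\d(G_b)\big\}.
$$
Because $\d(G_b)<\infty$, this gives that $\d(G)<\infty$ if and only if $\d(G_a)<\infty$, that is, $G$ is hyperbolic if and only if $G_a$ is. Applying the inductive hypothesis to the connected pair $(G_a,G')$, joined by at most $k-1$ operations, then yields that $G$ is hyperbolic if and only if $G'$ is, completing the induction. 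Deletions of isolated vertices need no separate treatment: such a vertex is a finite trivial component, so it either lies inside a discarded finite component handled above or, by the same T-decomposition reasoning, contributes $\d=0$ and does not affect hyperbolicity. I expect the cut-edge case, and in particular the reduction forcing the discarded component to be finite, to be the only genuinely delicate point.
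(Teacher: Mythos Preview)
Your proof is correct, but you are working harder than the statement requires. The theorem, as phrased, permits only edge deletions and edge contractions---not deletions of isolated vertices. Neither of these operations can reconnect a disconnected graph: contracting or deleting an edge inside one component leaves the other components untouched. Hence, since $G$ and $G'$ are both connected, every intermediate graph in the sequence is connected, and in particular each deleted edge is a non-cut-edge at the moment of its deletion. Your cut-edge case is therefore vacuous, and Theorem~\ref{thm_dG_leq_6dG-e_+_Lcycle} applies directly at every deletion step. This is why the paper's proof is a single sentence: simply iterate Theorem~\ref{t:quali} (or its quantitative form, Theorem~\ref{t:quanti}) for contractions and Theorem~\ref{thm_dG_leq_6dG-e_+_Lcycle} for deletions.

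That said, your longer argument is not wasted. It establishes the stronger statement in which deletions of isolated vertices are also allowed---the full definition of minor given in the introduction. For that version the cut-edge case is genuine, and your reduction (reordering the operations on the two components, forcing the discarded one to be finite, and then invoking the T-decomposition Theorem~\ref{t:}) is exactly what is needed.
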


\section{Non-simple graphs}\label{Sect:non-simple}

Simple graphs are the usual context in the study of hyperbolicity.
However, the operation of contraction is naturally defined for non-simple graphs.
For this reason, in this last section we study the distortion of the hyperbolicity constant by contraction of one edge in non-simple graphs.

\smallskip

Since we work with non-simple graphs, if there are $n_1\ge 1$ edges in $G$ joining $v$ and $A$
and $n_2\ge 1$ edges joining $v$ and $B$ for some $v\in V(G)$, then we obtain $n_1+n_2$ edges joining $v$ and $V_e$ in $G/e$,
see Figure \ref{fig:OutlineH2}.
Thus, in the context of non-simple graphs a cycle
$C \in \mathcal{C}(G,e)$ is transformed in a double edge in $G/e$,
as in Figure \ref{fig:OutlineH2}.
Furthermore, for each edge $e'\neq e$ joining $A$ and $B$ in $G$, we obtain a loop at $V_e$ in $G/e$.

\smallskip

We define the map $H: G \rightarrow G/e$ in the following way:
if $x$ belongs to the edge $e$, then $H(x):=V_e$;
if $x\in G$ does not belong to $e$, then $H(x)$ is the ``natural inclusion map".
Clearly $H$ is onto, \emph{i.e.}, $H(G)=G/e$.
Besides, $H$ is an injective map in $G\setminus\{e\}$.

\begin{figure}[h]
  \centering
  \scalebox{1}
  {\begin{pspicture}(-1.2,1)(5,3.5)
  \cnode*(-1,1.5){0.1}{A1}
  \cnode*(1,1.5){0.1}{A2}
  \cnode*(0,3){0.1}{A3}
  \ncarc[arcangle=0,linewidth=0.06cm,linecolor=lightgray]{-}{A1}{A2}
  \ncarc[arcangle=0,linewidth=0.04cm]{-}{A1}{A3}
  \ncarc[arcangle=0,linewidth=0.04cm]{-}{A3}{A2}
  \ncarc[arcangle=45,linewidth=0.04cm]{-}{A2}{A1}
  \uput[90](0,1.5){$e$}
  \uput[40](0.6,2.6){\Large{G}}
  %hasta aquí G
  \cnode*[linecolor=gray](4,1.5){0.1}{A4}
  \cnode*(4,3){0.1}{A5}
  \ncarc[arcangle=20,linewidth=0.04cm]{-}{A4}{A5}
  \ncarc[arcangle=-20,linewidth=0.04cm]{-}{A4}{A5}
  \nccircle[angleA=180,linewidth=0.04cm]{-}{A4}{.25}
%  \ncloop[linewidth=0.04cm]{-}{A4}{A4}
%  \psarc[linewidth=0.04cm]{-}{2}{0}{45}
  \uput[-10](4.2,1.5){$V_e$}
  \uput[20](4.4,2.6){\Large{$G/e$}}
  %hasta aquí G/e
  \psline[linewidth=0.08cm,fillcolor=black]{->}(2,2.2)(3,2.2)
  \uput[-90](2.5,2.2){$H$}
  %function H
  \end{pspicture}}
  \caption{The map $H$.}
  \label{fig:OutlineH2}
\end{figure}
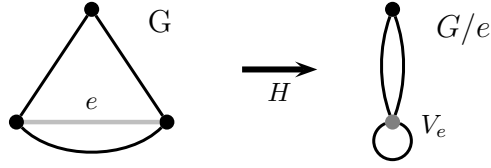

We prove now a version of Proposition \ref{l:quasi_isom} for non-simple graphs.

\begin{proposition}\label{l:quasi_isom2}
  Let $G$ be a non-simple graph and $e\in E(G)$. Then
   \begin{equation}\label{eq:+12}
     d_{G/e}\big(H(x),H(y)\big) \le d_{G}(x,y) \le d_{G/e}\big(H(x),H(y)\big) + 1,
   \end{equation}
for every $x,y\in G$.
\end{proposition}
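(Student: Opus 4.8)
The plan is to establish the two inequalities in \eqref{eq:+12} separately, following the outline of the proof of Proposition \ref{l:quasi_isom} but exploiting the feature that distinguishes the non-simple setting: since parallel edges are never merged, the map $H$ restricts to a \emph{bijection} between $E(G)\setminus\{e\}$ and $E(G/e)$. The left inequality is immediate and identical to the simple case. Given any geodesic $[xy]_G$ in $G$, the image $H([xy]_G)$ is a curve in $G/e$ joining $H(x)$ and $H(y)$ whose length is at most $L([xy]_G)$, because $H$ collapses $e$ to $V_e$ and acts as an isometry on every other edge. Hence $d_{G/e}(H(x),H(y)) \le L(H([xy]_G)) \le d_G(x,y)$.

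For the right inequality I would fix a geodesic $\g'$ in $G/e$ joining $H(x)$ and $H(y)$ and lift it to a path in $G$ from $x$ to $y$ of length at most $L(\g')+1$. Because $H$ is a bijection on edges (excluding $e$), each edge of $G/e$ has a \emph{unique} preimage edge in $G$, so $\g'$ lifts edge by edge in a unique way away from $V_e$; and since a geodesic is injective, $\g'$ meets $V_e$ at most once. I would then split into cases according to whether $V_e\in\g'$ and, if so, whether $V_e$ is an interior point or an endpoint. The trivial degenerate cases $x=y$ and $x,y\in e$ are settled directly by $d_G(x,y)\le 1$ together with $H(x)=H(y)=V_e$.

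When $V_e\notin\g'$, the unique lift $\g$ is a path from $x$ to $y$ with $L(\g)=L(\g')$, giving the bound with no correction. When $V_e$ is an interior point of $\g'$, the path enters and leaves $V_e$ through two edges whose unique lifts are incident to $A$ or to $B$; lifting the two arcs of $\g'\setminus\{V_e\}$ produces two arcs in $G$, each ending at one of $\{A,B\}$, and concatenating them --- inserting the edge $e=[A,B]$ of length $1$ only if these two endpoints differ --- yields a path from $x$ to $y$ of length at most $L(\g')+1$. When $V_e$ is an endpoint, say $H(x)=V_e$ (so $x\in e$), the lift of $\g'$ is a path from $A$ (or from $B$) to $y$ of length $L(\g')$; since the subarc of $e$ from $x$ to that vertex has length at most $1$, prepending it reaches $x$ with total cost at most $L(\g')+1$. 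In every case $d_G(x,y)\le L(\g')+1 = d_{G/e}(H(x),H(y))+1$.

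The key point --- and the reason the correction here is $+1$ rather than the $+3/2$ appearing in Proposition \ref{l:quasi_isom} --- is precisely this unique, well-defined lifting furnished by the bijectivity of $H$ on edges. In the simple case a single edge $[v,V_e]$ could arise from merging $[v,A]$ and $[v,B]$, so that two distinct points of a triangle $C\in\mathcal{C}(G,e)$ may collapse onto the same point of $G/e$; for a non-simple graph such a triangle becomes a genuine double edge and no collapse occurs. Consequently the only defect in the lift is the possible insertion of $e$ at a single passage through $V_e$, which contributes at most $1$. I expect the bookkeeping at the interior-$V_e$ case --- tracking which of $A,B$ each lifted arc terminates at, and verifying that inserting $e$ suffices to reconnect them --- to be the main, though routine, technical obstacle.
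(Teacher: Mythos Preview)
Your proposal is correct and follows essentially the same approach as the paper's proof. The paper handles the second inequality in a single sentence---``there is a path $\g$ in $G$ with $H(\g)=\g'$ and $L(\g)\le L(\g')+1$ since $\g$ can contain $e$ or a subset of $e$''---whereas you unpack this lifting construction into the explicit cases $V_e\notin\g'$, $V_e$ interior, and $V_e$ endpoint; your version is more detailed but the underlying idea is identical.
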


\begin{proof}
Fix $x,y \in G$. Let $[xy]_G$ be a geodesic in $G$ joining $x$ and $y$.
Clearly, $H([xy]_G)$ is a path joining $H(x)$ and $H(y)$ with length at most $L([xy]_G)$.
Hence, we obtain $d_{G/e}\big(H(x),H(y)\big) \le d_{G}(x,y)$.

Let $\g'$ be a geodesic in $G/e$ joining $H(x)$ and $H(y)$.
Then there is a path $\g$ in $G$ with $H(\g)=\g'$ and $L(\g)\le L(\g')+1$ since
$\g$ can contain $e$ or a subset of $e$.
Therefore, $d_{G}(x,y) \le L(\g)\le L(\g')+1 = d_{G/e}\big(H(x),H(y)\big) + 1$.
\end{proof}

Note that the inequalities in \eqref{eq:+12} are attained.
If $G$ is any non-simple graph, $\{v,w\}\neq \{A,B\}$ and $[v,w]\in E(G)$, then $d_{G/e}(H(v),H(w))=1=d_G(v,w)$.
If $G$ is any non-simple graph, then $d_G(A,B)=1=d_{G/e}(H(A),H(B))+1$.

\smallskip

For any simple path $\g'$ joining two different points in $G/e$, $H^{-1}(\g')$ is either a simple path $\g$ in $G$
or the union of a simple path $\g$ with $e$.
In both cases, we define $H_0^{-1}(\g'):=\g$.
If $\a'\in \g'$, then we define $H_0^{-1}(\a'):= H^{-1}(\a') \cap H_0^{-1}(\g')$
(in particular, $H_0^{-1}(V_e)$ can be either $A$, $B$ or $e$).
Furthermore, if $V_e\notin \g'$, then $H_0^{-1}(\g')= H^{-1}(\g')$ and
$$
H\!\mid_{H^{-1}(\g')} : H^{-1}(\g') \longrightarrow \g'
$$
is a bijective map.

\smallskip

One can check that the following simpler versions for non-simple graphs of Lemmas \ref{l:dist2_G}, \ref{l:dist1_G/e} and \ref{l:x} hold.

\begin{lemma}\label{l:dist2_G2}
   Let $G$ be a non-simple graph and $e \in E(G)$.
   Assume that for some $x,y\in G\setminus\{e\}$ there are two geodesics $\g_G$ and $\g_{G/e}$ in $G$ and $G/e$,
   respectively, joining $x,y$ and $H(x),H(y)$, respectively, such that $L(\g_G)=L(\g_{G/e})=L(H(\g_G))$ and $e\subset H_0^{-1}( \g_{G/e} )$. Then we have
   $$
     d_{G/e}(H(\a),\g_{G/e} )\le \d(G) \quad \forall\ \a\in \g_{G}
   $$
   and
   $$
     d_{G/e}\big( \a',H(\g_{G}) \big)\le 2 \d(G) \quad \forall\ \a'\in \g_{G/e}.
   $$
\end{lemma}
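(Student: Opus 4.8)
The plan is to transcribe the proof of Lemma \ref{l:dist2_G} into the non-simple setting, replacing the map $h$ by $H$ and Proposition \ref{l:quasi_isom} by its cleaner non-simple analogue, Proposition \ref{l:quasi_isom2}. This is why the statement is \emph{simpler}: in the non-simple case $H$ is injective on $G\setminus\{e\}$ and the distance estimate is the sharp $+1$ version with no exceptional cycles, so the hypothesis excluding triangles $C\in\mathcal{C}(G,e)$ disappears (such a triangle collapses to a double edge at $V_e$, which the $+1$ estimate absorbs automatically). As a first step I would reduce to the case where $G$ is hyperbolic, since otherwise both right-hand sides are infinite.

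For the first inequality I would build a geodesic triangle in $G$ whose contraction is $\g_{G/e}$. Let $z$ be the midpoint of $e=[A,B]$, and orient things by symmetry so that the two closures of the connected components of $H_0^{-1}(\g_{G/e}\setminus\{V_e\})$ join $x$ with $A$ and $B$ with $y$. The key identity is
$$d_G(z,x)=d_G(z,A)+\frac12=d_{G/e}(H(x),V_e)+\frac12,$$
and symmetrically for $y$; this forces the concatenation of the $x$-to-$A$ arc with the half-edge $[A,z]$ to be a geodesic $[xz]_G$, and likewise a geodesic $[zy]_G$. Then $T:=\{\g_G,[yz]_G,[zx]_G\}$ is a geodesic triangle in $G$, its $\d(G)$-thinness gives $d_G(\a,H_0^{-1}(\g_{G/e}))\le\d(G)$ for every $\a\in\g_G$, and pushing this estimate down to $G/e$ by Proposition \ref{l:quasi_isom2} yields the bound $\d(G)$.

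For the second inequality I would reduce by symmetry to $\a'\in H([yz]_G)$ and lift it to a point $\a\in[yB]_G$ with $H(\a)=\a'$. I would then split on the size of $d_G(y,e)$: if $d_G(y,e)\le 2\d(G)$ a direct bound through $y$ suffices, while if $d_G(y,e)>2\d(G)$ I would introduce the point $w\in[yz]_G$ with $d_G(w,e)=\d(G)$ and use thinness of $T$, together with the fact that $\g_{G/e}$ being a geodesic prevents a point near $e$ on the $[yz]$ side from being $\d(G)$-close to the $[zx]$ side, to conclude $d_G(\a,\g_G)\le\d(G)$ when $\a\in[wy]\setminus\{w\}$. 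For $\a$ between $w$ and $B$ a relay through a point $\a_1\in[wy]$ at distance $\d(G)$ produces the factor $2\d(G)$, and the value $\a=B$ (where $\a'=V_e$) follows by continuity; throughout, Proposition \ref{l:quasi_isom2} transfers distances between $G$ and $G/e$.

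The step I expect to require the most care is verifying that the arcs produced by $H_0^{-1}$ still attach correctly to $A$ and $B$ and that the concatenations with the two half-edges are genuine geodesics in $G$ — this is exactly where the hypotheses $L(\g_G)=L(\g_{G/e})=L(H(\g_G))$ and $e\subset H_0^{-1}(\g_{G/e})$ enter. Because multiple edges and loops can now sit at $V_e$, I would check explicitly that $H_0^{-1}(\g_{G/e})$ is a simple path and that no shortcut is created, so that the triangle $T$ and the whole argument transfer verbatim from the simple case.
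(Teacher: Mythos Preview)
Your proposal is correct and follows exactly the approach the paper intends: the paper does not give a separate proof of Lemma~\ref{l:dist2_G2} but simply remarks that the proof of Lemma~\ref{l:dist2_G} carries over, replacing $h$ by $H$ and Proposition~\ref{l:quasi_isom} by Proposition~\ref{l:quasi_isom2}, which is precisely the adaptation you sketch. One cosmetic point: in your displayed identity $d_G(z,x)=d_G(z,A)+\tfrac12$ the middle term should read $d_G(x,A)+\tfrac12$ (this is a typo inherited from the paper's proof of Lemma~\ref{l:dist2_G}).
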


\begin{lemma}\label{l:dist1_G/e2}
   Let $G$ be a non-simple graph and $e\in E(G)$ such that $G/e$ is not a tree.
   Let $[xy]_G$ be a geodesic in $G$ joining $x,y \in J(G)$.
   Assume that $H([xy]_G)$ is not a geodesic in $G/e$ and let $[H(x)H(y)]_{G/e}$ be a geodesic in $G/e$ joining $H(x)$ and $H(y)$.
   Then we have
   $$
     d_{G}(H_0^{-1}(\a'),[xy]_G)\le \d(G/e)+1 \le \frac73 \,\d(G/e), \quad \forall\ \a'\in [H(x)H(y)]_{G/e}
   $$
   and
   $$
     d_{G}\big(\a,H_0^{-1}([H(x)H(y)]_{G/e}) \big)\le 2 \d(G/e), \quad \forall\ \a\in [xy]_{G}.
   $$
\end{lemma}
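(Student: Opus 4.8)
The plan is to mirror the proof of Lemma~\ref{l:dist1_G/e}, replacing $h$ by $H$ and Proposition~\ref{l:quasi_isom} by Proposition~\ref{l:quasi_isom2}, and to exploit the fact that the ``$+1$'' distortion in \eqref{eq:+12} (rather than ``$+3/2$'') collapses the whole case analysis to a single case, the analog of case $(b.2)$ there. As before, since $G/e$ is not a tree, \cite[Theorem~11]{MRSV} gives $\d(G/e)\ge 3/4$, whence $\d(G/e)+1\le\frac73\,\d(G/e)$, and we may assume that $G/e$ is hyperbolic.

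First I would establish the structural reduction, which is the only genuinely new point. I claim that the hypothesis that $H([xy]_G)$ is not a geodesic, together with $x,y\in J(G)$, forces $e\not\subset[xy]_G$, $L([xy]_G)\ge 3/2$, $d_G(x,y)=d_{G/e}(H(x),H(y))+1$ and $V_e\in[H(x)H(y)]_{G/e}$. Indeed, if $e\subset[xy]_G$ then $H$ collapses $e$ and $L(H([xy]_G))=L([xy]_G)-1=d_G(x,y)-1\le d_{G/e}(H(x),H(y))$, so by \eqref{eq:+12} the walk $H([xy]_G)$ would be a geodesic, a contradiction. Hence $e\not\subset[xy]_G$, $L(H([xy]_G))=L([xy]_G)=d_G(x,y)$, and since this walk is not a geodesic, \eqref{eq:+12} yields $d_{G/e}(H(x),H(y))=d_G(x,y)-1$; the only way such a saving can occur is for $[H(x)H(y)]_{G/e}$ to pass through $V_e$, realizing the identification of $A$ and $B$. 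A geodesic of $G$ that crosses from $A$ to $B$ does so along a single edge of length one; since $e$ itself is excluded, this must be a parallel copy $e'$ of $e$, so $[xy]_G$ contains an edge $[A',B']=e'$ with $\{A',B'\}=\{A,B\}$ and $d_G(x,A')=d_G(x,e)$, $d_G(y,B')=d_G(y,e)$. This is exactly the configuration of case $(b.2)$ in Lemma~\ref{l:dist1_G/e}; the triangle-collapse phenomenon of cases $(a)$ and $(b.1)$ cannot occur, because a cycle $C\in\mathcal{C}(G,e)$ is transformed into a double edge and $H$ never identifies two distinct edges of $G\setminus\{e\}$.

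With this reduction in hand, let $z$ be the midpoint of $[A',B']$. Since $[xz]_G,[zy]_G\subset[xy]_G$ are geodesics of $G$ on which $e$ is not traversed and $z$ lies on the edge that becomes a loop at $V_e$, a direct length computation using $d_{G/e}(H(x),V_e)=d_G(x,A')$ and $d_{G/e}(H(y),V_e)=d_G(y,B')$ shows that $H([xz]_G)$ and $H([zy]_G)$ are geodesics in $G/e$. Thus $T=\{[H(x)H(y)]_{G/e},H([yz]_G),H([zx]_G)\}$ is a geodesic triangle in $G/e$ with $\d(T)\le\d(G/e)$. For the first inequality, given $\a'\in[H(x)H(y)]_{G/e}$, the $\d(G/e)$-thinness of $T$ provides a point of $H([xy]_G)=H([yz]_G)\cup H([zx]_G)$ within $\d(G/e)$ of $\a'$, and lifting through the ``$+1$'' bound of \eqref{eq:+12} gives
\[
 d_{G}(H_0^{-1}(\a'),[xy]_G)\le d_{G/e}(\a',H([xy]_G))+1\le \d(G/e)+1\le \frac73\,\d(G/e).
\]
For the second inequality I would argue exactly as in case $(b.2)$: taking $\a\in[yz]_G$ by symmetry and, when $L([yz]_G)>2\d(G/e)$, the point $w\in[yz]_G$ with $d_G(w,z)=\d(G/e)$, the hyperbolicity of $G/e$ places $H(\a)$ within $\d(G/e)$ of $[H(x)H(y)]_{G/e}\cup H([zx]_G)$; whenever the nearest point lies on $H([zx]_G)$ the connecting geodesic must pass through $V_e\in[H(x)H(y)]_{G/e}$, so in fact $d_{G/e}(H(\a),[H(x)H(y)]_{G/e})\le\d(G/e)$, and the points of $[zw]_G$ are absorbed by a final triangle-inequality step, yielding the bound $2\d(G/e)$.

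The hard part is the structural reduction of the second paragraph: one must verify that in a non-simple graph a non-geodesic image $H([xy]_G)$ cannot be produced by two edges merging (as happens for $h$ in the simple setting) but only by a parallel edge collapsing to a loop. Once this is checked, the proof is a transcription of case $(b.2)$ of Lemma~\ref{l:dist1_G/e} with $H$ in place of $h$, and the remaining steps are the same routine computations.
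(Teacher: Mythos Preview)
Your overall strategy---reducing to the analog of case~$(b.2)$ of Lemma~\ref{l:dist1_G/e}, splitting $[xy]_G$ at the midpoint $z$ of a suitable edge, and then running the thin-triangle argument---is exactly what the paper intends. But the ``structural reduction'' you single out as the hard part contains a genuine error: it is \emph{not} true that $[xy]_G$ must contain an edge parallel to $e$.

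Take $G$ with vertices $x,A,B,w,y$, edges $[x,A]$, $e=[A,B]$, $[x,w]$, $[w,B]$, $[B,y]$ (add an irrelevant loop to make $G$ formally non-simple). Then $d_G(x,y)=3$, and $[xy]_G=[x,w]\cup[w,B]\cup[B,y]$ is a geodesic whose image $H([xy]_G)$ has length~$3$ in $G/e$, while $d_{G/e}(H(x),H(y))=2$ via the image of $[x,A]$. All hypotheses are met, yet $A\notin[xy]_G$, so $[xy]_G$ contains no edge with endpoints $\{A,B\}$. Your sentence ``a geodesic of $G$ that crosses from $A$ to $B$ does so along a single edge'' is about $G$-geodesics between $A$ and $B$; it says nothing about $[xy]_G$, which need not visit $A$ at all. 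The mechanism producing the non-geodesic image here is not ``a parallel edge collapsing to a loop'' but simply that the \emph{other} $G$-geodesic $[x,A]\cup e\cup[B,y]$ shortens by~$1$ under $H$.

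The fix is to follow case~$(b.2)$ verbatim: define $A',B'\in V(G)$ on $[xy]_G$ by $d_G(x,A')=d_G(x,e)$ and $d_G(y,B')=d_G(y,e)$, so that $[A',B']\subset[xy]_G$ is an edge of length~$1$ with midpoint $z$, \emph{without} claiming $\{A',B'\}=\{A,B\}$. One then checks that $H([xz]_G)$ and $H([zy]_G)$ are geodesics in $G/e$ directly: any $G/e$-geodesic from $H(x)$ to $H(z)$ either avoids $V_e$ (and lifts to a $G$-path of the same length, hence has length $\ge d_G(x,z)$) or passes through $V_e$ (and then has length $d_G(x,e)+d_{G/e}(V_e,H(z))\ge d_G(x,e)+\tfrac12=d_G(x,z)$, since $H(z)$ is a midpoint of an edge of $G/e$). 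Your justification via ``$z$ lies on the edge that becomes a loop at $V_e$'' is a special case of this, but not the general one. Once this is corrected, the remainder of your argument goes through unchanged.
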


\begin{lemma} \label{l:x2}
Let $G$ be a non-simple graph and $e\in E(G)$. Then $G$ is a tree if and only if $G/e$ is a tree.
\end{lemma}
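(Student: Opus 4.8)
The plan is to prove both implications simultaneously by observing that the \emph{circuit rank} (first Betti number) of a connected graph is preserved under the contraction operation on non-simple graphs. Recall that a connected graph is a tree precisely when it is acyclic, equivalently when its circuit rank $|E|-|V|+1$ vanishes; this formula computes the dimension of the cycle space correctly even in the presence of loops and multiple edges (a single loop or a double edge each contributing circuit rank $1$). Since $G$ is connected by our standing assumption and contraction preserves connectivity, both $G$ and $G/e$ are connected, so it suffices to compare their circuit ranks.

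First I would record the effect of contraction on the vertex and edge counts. In the non-simple setting, contracting $e=[A,B]$ merges $A$ and $B$ into the single vertex $V_e$ and deletes only the edge $e$: every other edge of $G$ survives in $G/e$ (a parallel edge of $e$ becoming a loop at $V_e$, and two edges from a common neighbour $v$ of $A$ and $B$ producing two parallel edges at $V_e$, exactly as described before Proposition \ref{l:quasi_isom2}). Hence $|V(G/e)|=|V(G)|-1$ and $|E(G/e)|=|E(G)|-1$, so that
$$
|E(G/e)|-|V(G/e)|+1 = |E(G)|-|V(G)|+1 .
$$
Thus the circuit rank is invariant under contraction.

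From this identity the lemma is immediate: using that both $G$ and $G/e$ are connected, $G$ is a tree if and only if $|E(G)|-|V(G)|+1=0$, if and only if $|E(G/e)|-|V(G/e)|+1=0$, if and only if $G/e$ is a tree.

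The only point requiring care — the main, though minor, obstacle — is verifying the edge count $|E(G/e)|=|E(G)|-1$ in the presence of loops and multiple edges: one must confirm that contraction in the non-simple category never identifies two distinct edges, so that exactly one edge (namely $e$) disappears while all others merely get reattached to $V_e$. Once this bookkeeping is in place the argument is purely combinatorial. Alternatively one could argue directly that every cycle of $G$ projects under $H$ to a cycle of $G/e$ while every cycle of $G/e$ lifts to a cycle of $G$, but the circuit-rank computation is shorter and treats both implications at once.
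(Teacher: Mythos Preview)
Your circuit-rank computation is correct for \emph{finite} graphs, but the paper imposes no finiteness hypothesis on $G$ (note for instance that $\delta(G)=\infty$ is allowed throughout, and Theorem~\ref{t:minor2} would not need the qualifier ``finite number of edges'' if graphs were already finite). For an infinite graph the expression $|E|-|V|+1$ is not a well-defined integer, so the identity $|E(G/e)|-|V(G/e)|+1=|E(G)|-|V(G)|+1$ carries no content and the argument collapses. This is a genuine gap in your primary proof.

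The alternative you sketch in your last paragraph is the correct route and works uniformly in the infinite case: since $H$ is injective on $G\setminus e$, any cycle of $G$ maps under $H$ to a closed walk in $G/e$ that contains a cycle (a cycle through $e$ of length $2$ becoming a loop at $V_e$, one of length $\ge 3$ becoming a shorter cycle), and conversely any cycle of $G/e$ lifts via $H^{-1}$ to a path in $G$ whose endpoints lie in $\{A,B\}$, which closes up to a cycle after inserting $e$ if necessary. That direct cycle correspondence is what you should make the main argument.

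For comparison with the paper: no proof is actually given there --- Lemma~\ref{l:x2} is simply asserted as something ``one can check'' alongside Lemmas~\ref{l:dist2_G2} and~\ref{l:dist1_G/e2}. Note, too, that the simple-graph analogue Lemma~\ref{l:x} has a different conclusion (only $\delta(G)\le 1$ when $G/e$ is a tree, since in the simple category $C_3/e$ is a tree while $C_3$ is not), so its proof is not a template here; your cycle-lifting approach is the natural one.
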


Hence, the main results also hold for non-simple graphs.

\begin{theorem}
\label{t:quanti2}
Let $G$ be a non-simple graph and $e\in E(G)$. Then
$$
  \frac13\,\d(G/e) \le \d(G) \le \frac{16}3 \,\d(G/e) + 1.
$$
\end{theorem}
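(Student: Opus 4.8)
The plan is to transcribe the proof of Theorem \ref{t:quanti} almost verbatim, replacing the map $h$ by $H$ and the four technical inputs for simple graphs by their non-simple counterparts: Proposition \ref{l:quasi_isom} becomes Proposition \ref{l:quasi_isom2}, Lemma \ref{l:dist2_G} becomes Lemma \ref{l:dist2_G2}, Lemma \ref{l:dist1_G/e} becomes Lemma \ref{l:dist1_G/e2}, and Lemma \ref{l:x} becomes Lemma \ref{l:x2}. First I would note that Proposition \ref{l:quasi_isom2} exhibits $H$ as a $0$-full $(1,1)$-quasi-isometry from $G$ onto $G/e$, so Theorem \ref{invarianza} guarantees that $G$ and $G/e$ are hyperbolic or non-hyperbolic simultaneously; when both are non-hyperbolic the two inequalities read $\infty\le\infty$ and hold trivially, so I may assume from now on that both graphs are hyperbolic.

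For the lower bound $\frac13\,\d(G/e)\le\d(G)$ I would invoke Theorem \ref{t:TrianVMp} to produce a geodesic triangle $T'=\{x',y',z'\}$ in $G/e$ that is a cycle with vertices in $J(G/e)$, $\d(T')=\d(G/e)$, and a point $p'\in[x'y']$ realizing the thinness. Lifting $T'$ through $H^{-1}$ to a cycle $T$ in $G$ with $H(T)=T'$, one of two situations occurs: either $T$ is already a geodesic triangle in $G$, in which case Proposition \ref{l:quasi_isom2} gives $\d(G/e)\le\d(G)$ directly, or the edge $e$ lies in exactly one side of $T$ while the other two sides stay geodesic. In the latter case I split according to whether $e$ sits in the side carrying $p'$ or in one of the other two, exactly as in cases $(b.1)$ and $(b.2)$ of Theorem \ref{t:quanti}; Lemma \ref{l:dist2_G2} supplies the $2\d(G)$ estimate, and a further application of the thin-triangle inequality together with Proposition \ref{l:quasi_isom2} yields $\d(G/e)\le 3\d(G)$ in the worst case.

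For the upper bound I would again call on Theorem \ref{t:TrianVMp} to obtain an extremal geodesic triangle $T=\{x,y,z\}$ in $G$ that is a cycle with $x,y,z\in J(G)$ and $p\in[xy]$ satisfying $d_G(p,[yz]\cup[zx])=\d(G)$. If $\d(G)\le1$ the inequality is immediate, so I assume $\d(G)>1$; then $G$ is not a tree and Lemma \ref{l:x2} tells me $G/e$ is not a tree either, which is precisely the hypothesis needed to apply Lemma \ref{l:dist1_G/e2}. Letting $n\in\{1,2,3\}$ count how many of $H([xy]),H([yz]),H([zx])$ are geodesics in $G/e$ (at least one is, by the analogue of Remark \ref{r:No3e_G}), I run the same case analysis $(A)$--$(B)$ as in Theorem \ref{t:quanti}: when $H([xy])$ is a geodesic I use Proposition \ref{l:quasi_isom2} to transport $p$ and the thinness of the image triangle in $G/e$, and when $H([xy])$ is not a geodesic I first pull $p$ back to a geodesic $[H(x)H(y)]$ via Lemma \ref{l:dist1_G/e2}. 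The worst branch is the one corresponding to case $(B.2)$ with the comparison point landing in $[H(z)H(x)]$, where three successive estimates (a $2\d(G/e)$ from Lemma \ref{l:dist1_G/e2}, a $\d(G/e)+1$ from Proposition \ref{l:quasi_isom2}, and a $\frac73\,\d(G/e)$ again from Lemma \ref{l:dist1_G/e2}) add up to $\frac{16}3\,\d(G/e)+1$.

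The reason this adaptation goes through cleanly --- and the point I would check most carefully --- is that the non-simple versions of the technical lemmas carry the same numerical constants ($2\d$ in Lemma \ref{l:dist2_G2}, and $\d+1\le\frac73\d$ together with $2\d$ in Lemma \ref{l:dist1_G/e2}) as their simple counterparts, so the additive chains combine to give exactly $1/3$ and $16/3$. In fact the non-simple setting is genuinely simpler: the delicate treatment of the length-$3$ cycles in $\mathcal{C}(G,e)$ that forced extra cases in Proposition \ref{l:quasi_isom} and part $(a)$ of Lemma \ref{l:dist1_G/e} disappears, since such a cycle collapses to a double edge in $G/e$ rather than to a tree-like path; this is exactly why Lemma \ref{l:x2} is a clean equivalence (tree if and only if tree) rather than the one-sided bound $\d(G)\le1$ of Lemma \ref{l:x}. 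The only real work, then, is to confirm that every invocation of a simple-graph lemma in the proof of Theorem \ref{t:quanti} can be matched, line by line, with the corresponding non-simple lemma, and that the trivial cases ($\d(G)\le1$, or both graphs non-hyperbolic) are disposed of first.
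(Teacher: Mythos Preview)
Your proposal is correct and matches the paper's approach exactly: the paper does not spell out a proof of Theorem~\ref{t:quanti2} but simply states that, once the non-simple analogues Proposition~\ref{l:quasi_isom2} and Lemmas~\ref{l:dist2_G2}, \ref{l:dist1_G/e2}, \ref{l:x2} are in hand, ``the main results also hold for non-simple graphs.'' Your write-up is precisely the verbatim transcription of the proof of Theorem~\ref{t:quanti} with $h\mapsto H$ and each lemma swapped for its non-simple counterpart, and your observation that the $\mathcal{C}(G,e)$ case analysis disappears is exactly why the paper calls these versions ``simpler.''
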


\begin{theorem}
\label{t:cutedge2}
Let $G$ be a non-simple graph and $e$ a cut-edge in $G$. Then
$$
\d(G/e) = \d(G) .
$$
\end{theorem}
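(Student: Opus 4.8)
The plan is to mimic the proof of the simple-graph version, Theorem \ref{t:cutedge}, since the T-decomposition machinery of Theorem \ref{t:} applies equally well to non-simple graphs. First I would note that, because $e=[A,B]$ is a cut-edge, it can be neither a loop nor a multiple edge: if a second edge joined $A$ and $B$, then $G\setminus e$ would remain connected, contradicting that $e$ is a cut-edge. Hence $e$ is an ordinary bridge and no cycle of $G$ passes through it. Consequently the connected components $\{G_s\}_s$ of $G\setminus e$, together with $e$ itself, form a T-decomposition of $G$, since $e$ meets the component containing $A$ only at the cut-vertex $A$ and the component containing $B$ only at $B$. Applying Theorem \ref{t:} and using $\delta(e)=0$ then yields
$$
\delta(G)=\max\Big\{\sup_s\delta(G_s),\,\delta(e)\Big\}=\sup_s\delta(G_s).
$$

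For $G/e$, contracting $e$ identifies $A$ and $B$ into the single vertex $V_e$. For each $s$ I would let $G_s'$ be the subgraph of $G/e$ obtained from $G_s$ by relabelling its copy of the vertex in $\{A,B\}$ (exactly one of the two lies in each of the two components) as $V_e$. Since $e$ carried no parallel edge and was not a loop, this relabelling produces no identification of distinct edges, so each $G_s'$ is isomorphic---indeed isometric---to $G_s$, whence $\delta(G_s')=\delta(G_s)$. The subgraphs $\{G_s'\}_s$ overlap only at the cut-vertex $V_e$, so they form a T-decomposition of $G/e$. A second application of Theorem \ref{t:} gives
$$
\delta(G/e)=\sup_s\delta(G_s')=\sup_s\delta(G_s)=\delta(G),
$$
which is the claimed equality.

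The only point meriting attention---and the one mild obstacle---is confirming that contracting a cut-edge in the non-simple setting introduces neither a loop nor a parallel edge, so that the isometry between each $G_s$ and $G_s'$ genuinely survives. This is precisely where the cut-edge hypothesis enters, and it is what upgrades the general two-sided estimate of Theorem \ref{t:quanti2} to an exact equality in this special case.
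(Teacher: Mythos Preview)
Your proof is correct and follows essentially the same approach as the paper, which simply indicates that the argument of Theorem~\ref{t:cutedge} carries over verbatim via the T-decomposition $\{G_s\}_s\cup\{e\}$ and Theorem~\ref{t:}. Your additional remark that a cut-edge in a non-simple graph can be neither a loop nor one of a set of parallel edges---so that contraction creates no new loops or multi-edges and each $G_s'$ is genuinely isometric to $G_s$---is exactly the small check needed to justify the passage to the non-simple setting, and the paper leaves it implicit.
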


\begin{theorem}\label{t:minor2}
Let $G$ be a non-simple graph and $G'$ a (connected) minor graph of $G$ obtained by a finite number of edges removed and/or contracted.
Then $G$ is hyperbolic if and only if $G'$ is hyperbolic.
\end{theorem}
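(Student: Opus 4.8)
The plan is to reduce the statement to a single elementary operation and then iterate. Since $G'$ is a connected minor of $G$, I would realize it as the last term of a finite chain
$$
G = G_0, \; G_1, \; \dots, \; G_m = G',
$$
where each $G_{i+1}$ is obtained from $G_i$ by either contracting a single edge or deleting a single edge, and where every intermediate graph $G_i$ is connected (deletions of isolated vertices are irrelevant for hyperbolicity and may be ignored). The argument is then an induction on $m$: the base case $m=0$ is trivial, and in the inductive step it suffices to prove that $G_i$ is hyperbolic if and only if $G_{i+1}$ is, since chaining the resulting equivalences yields the claim.

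For the step in which $G_{i+1}=G_i/e$ is a contraction, I would invoke Theorem \ref{t:quanti2}, which gives $\frac13\,\d(G_{i+1}) \le \d(G_i) \le \frac{16}{3}\,\d(G_{i+1}) + 1$; hence $\d(G_i)<\infty$ exactly when $\d(G_{i+1})<\infty$, so the two graphs are hyperbolic or not simultaneously. For the step in which $G_{i+1}=G_i\setminus e$ is a deletion, the graph $G_{i+1}$ is connected, so $d_{G_{i+1}}(A,B)<\infty$, and Theorem \ref{thm_dG_leq_6dG-e_+_Lcycle} (stated for general graphs, hence applicable here) gives
$$
\frac15\,\d(G_{i+1}) \;\le\; \d(G_i) \;\le\; 6\,\d(G_{i+1}) + d_{G_{i+1}}(A,B).
$$
Again both bounds are finite precisely when $\d(G_{i+1})$ is finite, so $G_i$ and $G_{i+1}$ are hyperbolic or not simultaneously. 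Combining the contraction and deletion steps completes the induction, exactly as in the proof of Theorem \ref{t:minor} for simple graphs.

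The delicate point, and where I expect the main obstacle to lie, is the connectivity bookkeeping in the deletion step: Theorem \ref{thm_dG_leq_6dG-e_+_Lcycle} requires $G_i\setminus e$ to be connected, for otherwise $d_{G_i\setminus e}(A,B)=\infty$ and the upper bound is vacuous. I would handle this by ordering the operations so that connectivity is preserved throughout: perform all contractions first (a contraction never disconnects a connected graph), and delete an edge $e$ only when $G_i\setminus e$ stays connected. When a cut-edge of some $G_i$ must be removed, I would instead argue through the canonical $T$-decomposition via Theorem \ref{t:}, exactly as in the proof of Theorem \ref{t:cutedge2}, to isolate the connected piece that survives into $G'$. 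Ensuring that a connected minor can always be reached by such a connectivity-preserving sequence of single operations is the one genuinely non-routine ingredient; everything else is a direct concatenation of the quantitative estimates in Theorems \ref{t:quanti2} and \ref{thm_dG_leq_6dG-e_+_Lcycle}.
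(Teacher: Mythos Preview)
Your proposal is correct and follows exactly the approach the paper indicates: iterate the quantitative bounds of Theorems \ref{t:quanti2} and \ref{thm_dG_leq_6dG-e_+_Lcycle} along a finite chain of single-edge operations, just as for Theorem \ref{t:minor} in the simple case.

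One remark: the connectivity concern you flag is not actually an issue here. The statement restricts to minors obtained \emph{only} by edge contractions and edge deletions (no vertex deletions). Neither of these operations can decrease the number of connected components: a contraction acts within a component, and a deletion either keeps a component intact or splits it. Hence if some intermediate $G_k$ were disconnected, every subsequent $G_i$ and in particular $G'=G_m$ would be disconnected too, contrary to hypothesis. So all the $G_i$ are automatically connected, Theorem \ref{thm_dG_leq_6dG-e_+_Lcycle} applies at every deletion step with $d_{G_i\setminus e}(A,B)<\infty$, and no reordering or appeal to $T$-decompositions is needed.
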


%One can check that the following results on the equality $\d(G/e) = \d(G)$ for non-simple graphs hold.
%
%\begin{theorem}\label{t:loops2}
%Let $G$ be a non-simple graph and $\{e^j\}$ any non-empty subset of the set of loops in $G$.
%Then $\d(G/\{\cup_je_j\}) = \d(G)$ if and only if $G/\{\cup_je_j\}$ is not a tree.
%\end{theorem}
%
%As usual, a subset $Y\subseteq X$ is said to be \emph{non-trivial} if $Y\neq X$ and $Y\neq \emptyset$.
%
%\begin{theorem}\label{t:me2}
%Let $G$ be a non-simple graph and $\{e_1,\dots,e_k\}$ the set of edges joining the vertices $A$ and $B$ in $G$.
%Then $\d(G/e_1) = \d(G/F)$ for any non-trivial subset $F$ of $\{e_1,\dots,e_k\}$.
%\end{theorem}

\

\end{document}